\documentclass[11pt]{amsart}

\usepackage{graphicx, amssymb, pinlabel}
\usepackage[english]{babel}
\newtheorem{theorem}{Theorem}[section]
\newtheorem{proposition}[theorem]{Proposition}
\newtheorem{lemma}[theorem]{Lemma}
\newtheorem{corollary}[theorem]{Corollary}
\theoremstyle{definition}
\newtheorem{definition}[theorem]{Definition}
\newtheorem{notation}[theorem]{Notation}
\newtheorem{example}[theorem]{Example}
\newtheorem{remark}[theorem]{Remark}

\numberwithin{equation}{section}

\newcommand{\Homeo}{\operatorname{Homeo}}
\newcommand{\Mod}{\operatorname{Mod}}
\newcommand{\lcm}{\operatorname{lcm}}
\renewcommand{\O}{\operatorname{{\mathcal O}}}

\begin{document}

\title[Roots of Dehn twists about separating curves]
{Roots of Dehn twists\\ about separating curves}

\author{Kashyap Rajeevsarathy}
\address{Department of Mathematics\\
University of Oklahoma\\
Norman, Oklahoma 73019\\
USA}
\urladdr{www.math.ou.edu/$_{\widetilde{\phantom{n}}}$kashyap/}
\email{kashyap@math.ou.edu}
\date{\today}
\keywords{surface, mapping class, Dehn twist, separating curve, root}

\begin{abstract}
Let $C$ be a curve in a closed orientable surface $F$ of genus $g \geq 2$ that separates $F$ into subsurfaces $\widetilde {F_i}$ of genera $g_i$, for $i = 1,2$. We study the set of roots in $\Mod(F)$ of the Dehn twist $t_C$ about $C$. All roots arise from pairs of $C_{n_i}$-actions on the $\widetilde{F_i}$, where $n=\lcm(n_1,n_2)$ is the degree of the root, that satisfy a certain compatibility condition. The $C_{n_i}$ actions are of a kind that we call nestled actions, and we classify them using tuples that we call data sets. The compatibility condition can be expressed by a simple formula, allowing a classification of all roots of $t_C$ by compatible pairs of data sets. We use these data set pairs to classify all roots for $g = 2$ and $g = 3$. We show that there is always a root of degree at least $2g^2+2g$, while $n \leq 4g^2+2g$. We also give some additional applications.
\end{abstract}

\maketitle

\section{Introduction}

Let $F$ be a closed orientable surface of genus $g \geq 2$ and $C$ be a simple closed curve in $F$. Let $t_C$ denote a left handed Dehn twist about $C$.

When $C$ is a nonseparating curve, the existence of roots of $t_C$ is not so apparent. In their paper~\cite{MS}, D. Margalit and S. Schleimer showed the existence of such roots by finding elegant examples of roots of $t_C$ whose degree is $2g + 1$ on a surface of genus $g+1$. This motivated an earlier collaborative work with D. McCullough~\cite{MK1} in which we derived necessary and sufficient conditions for the existence of a root of degree $n$. As immediate applications of the main theorem in the paper, we showed that roots of even degree cannot exist and that $n \leq 2g + 1$. The latter shows that the Margalit-Schleimer roots achieve the maximum value of $n$ among all the roots for a given genus.

Suppose that $C$ is a  curve that separates $F$ into subsurfaces $\widetilde{F}_i$ of genera $g_i$ for $i = 1, 2$. It is evident that roots of $t_C$ exist. As a simple example, for the closed orientable surface of genus 2, we can obtain a square root of the Dehn twist $t_C$ by rotating one of the subsurfaces on either side of $C$ by an angle $\pi$, producing a half-twist near $C$. As in the case for nonseparating curves, a natural question is whether we can give necessary and sufficient conditions for the existence of a degree $n$ root of $t_C$. In this paper, we derive such conditions and apply them to obtain information about the possible degrees. We use Thurston's orbifold theory~\cite[Chapter 13]{T1} to prove the main result. A good reference for this theory is P.Scott~\cite{S1}.

We start by defining a special class of $C_n$-actions called \textit{nestled $(n, \ell)$-actions}. These $C_n$-actions have a distinguished fixed point and the points fixed by some nontrivial element of $C_n$ form $\ell + 1$ orbits. The equivalency of two such actions will be given by the existence of a conjugating homeomorphism that also satisfies an additional condition on their distinguished fixed points. Two equivalence classes of actions will form a \textit{compatible pair} if the turning angles of their representative actions around their distinguished fixed points add up to $2\pi/n$. The key topological idea in our theory is defining nestled $(n_i,\ell_i)$-actions on the subsurfaces $\tilde{F_i}$ for $i = 1,2$ so that they form a compatible pair, thus giving a root of degree $n = \lcm(n_1,n_2)$.  Conversely, for each root of degree $n$, we reverse this argument to produce a corresponding compatible pair.

In Section~\ref{sec:nestlednl and data sets}, we introduce the abstract notion of a \textit{data set} of degree $n$. As in the case of nonseparating curves, a \textit{data set of degree n} is basically a tuple that encodes the essential algebraic information required to describe a nestled action. We show that equivalence classes of nestled $(n,\ell)$-actions actually correspond to data sets, that is, each class has a corresponding data set representation. Data sets $D_i$ of degree $n_i$, for $i=1,2$ form a \textit{data set pair} $(D_1,D_2)$ when they satisfy the formula $\frac{n}{n_1}k_1 + \frac{n}{n_2}k_2 \equiv 1\bmod n$, where the turning angles at the centers of the disks are $\frac{2\pi k_i}{n_i} \bmod 2\pi$. In Theorem~\ref{thm1}, we show that this number-theoretic condition is an algebraic equivalent of the compatibility condition for actions, thus proving that data set pairs correspond bijectively to conjugacy classes of roots. This theorem is essentially a translation of our topological theory of roots to the algebraic language of data sets.

As an immediate application of Theorem~\ref{thm1}, we show the existence of a root of degree $\lcm(4g_1, 4g_2+2)$, and in Section~\ref{sec:genus2_classification}, we give calculation of roots in low-genus cases. In Section~\ref{sec:spherical}, we obtain some bounds on the orders of spherical nestled actions, that is, nestled actions whose quotient orbifolds are topologically spheres. For example, we prove that all nestled $(n, \ell)$-actions for $n \geq \frac{2}{3}(2g-1)$ have to be spherical. Finally, in Section~\ref{sec:degree-bounds}, we use the main theorem and the results obtained in Section~\ref{sec:spherical} to derive bounds on $n$. We show that in general, $n \leq 4g^2+2g$ and for any positive integer $N$, $n \leq  4g^2 + (4-2N)g + \frac{{(N-2)}^2}{4}$ whenever both $g_i > N+3$.

\section{Nestled $(n,\ell)$-actions}
\label{sec:nestlednl}
An action of a group $G$ on a topological space $X$ is defined as a homomorphism $h : G \rightarrow \Homeo(X)$. Since we are interested only in $C_n$-actions, we will fix a generator $t$ for $C_n$ and identify the action with the isotopy class of the homeomorphism $h(t)$ in $\Mod(X)$. In this section, we introduce nestled $(n,\ell)$-actions and give an example for such an action. These actions will play a crucial role in the theory we will develop for roots of Dehn twists.

\begin{definition}
An orientation-preserving $C_n$-action on a surface $F$ of genus at least 1 is said to be a nestled $(n,\ell)$-action if either $n = 1$, or $n > 1$ and:
\begin{enumerate}
\item [(i)] the action has at least one fixed point,
\item [(ii)] some fixed point has been selected as the distinguished fixed point, and
\item [(iii)] the points fixed by some nontrivial element of $C_n$ form $\ell + 1$ orbits.
\end{enumerate}
This is equivalent to the condition that the quotient orbifold has $\ell + 1$ cone points, one of which is a distinguished cone point of order $n$.
\end{definition}

A nestled $(n, \ell)$-action is said to be \textit{trivial} if $n = 1$, that is, if it is the action of the trivial group on $F$. In this case only, we allow a cone point of order 1 in the quotient orbifold. The distinguished cone point can then be any point in $F$, and we require $\ell=0$.

\begin{definition}
Assume that $F$ has a fixed orientation and fixed Riemannian metric. Let $h$ be a nestled-$(n,\ell)$ action on $F$ with a distinguished fixed point $P$. The \textit{turning angle} $\theta(h)$ for $h$ is the angle of rotation of the induced isomorphism $h_*$ on the tangent space $T_{P}$, in the direction of the chosen orientation.
\end{definition}

\begin{example}[Margalit-Schleimer,~\cite{MS}]
Rotate a regular $(4g+2)$-gon with opposite sides identified about its center $P$ through an angle $\frac{2\pi (g+1)}{(2g + 1)}$. Identifying the opposite sides of $P$, we get a $C_{2g+1}$-action $h$ on $S_{g}$ with three fixed points denoted by $P$, $x$ and $y$. Since the quotient orbifold has three cone points of order $2g + 1$, this defines a nestled $(2g + 1, 2)$-action on ~$S_{g}$. If we choose $P$ as the distinguished fixed point for the action $h$, then $\theta(h) = \frac{2\pi (g+1)}{(2g + 1)}$.
\begin{figure}[h]
\labellist
\small
\pinlabel $P$ [B] at 63 83
\pinlabel $\displaystyle \frac{4\pi}{3}$ [B] at 116 83
\pinlabel $x$ [B] at 81 -8
\pinlabel $y$ [B] at 81 180
\pinlabel $x$ [B] at 0 130
\pinlabel $x$ [B] at 162 130
\pinlabel $y$ [B] at 0 40
\pinlabel $y$ [B] at 162 40
\endlabellist
\centering
\includegraphics[width = 23 ex]{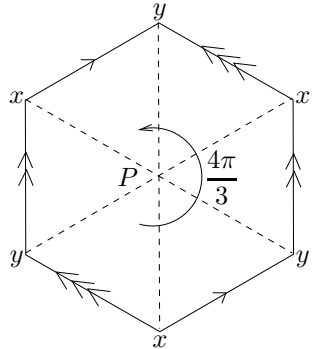}
\caption{A nestled $(2g + 1, 2)$-action for $g = 1$.}
\label{ms}
\end{figure}
\end{example}

\begin{remark}
\label{rem:inv disk}
Every nestled $(n,\ell)$-action has an invariant disk around its distinguished fixed point. Let $F$ be a closed oriented surface with a fixed Riemannian metric $\rho$, and let $h$ be a nestled $(n,\ell)$-action on $F$ with a distinguished fixed point $P$. Consider the Riemannian metric $\bar{\rho}$ defined by
\[ {\langle v, w \rangle}_{\bar{\rho}} = \frac{1}{n} \sum_{i=1}^n {\langle {h^i}_*(v), {h^i}_*(w) \rangle}_{\rho}\ ,\] where $v, w \in T_P F$. Under this metric $\bar{\rho}$, $h$ is an isometry. Since there exists $\epsilon>0$ such that ${exp}_P : B_{\epsilon}(0) \subset T_P F \rightarrow B_{\epsilon}(P) \subset F$ is a diffeomorphism, $h$ preserves the disk $B_{\epsilon}(P)$.
\end{remark}

\begin{definition}
\label{def:nestled eq pair}
Two nestled $(n,\ell)$-actions $h$ and $h'$ on $F$ with distinguished fixed points $P$ and $P'$ are \textit{equivalent} if there exists an orientation-preserving homeomorphism $t\colon F\to F$ such that
\begin{enumerate}
\item[(i)] $t(P)=P'$.
\item[(ii)] $tht^{-1}$ is isotopic to $h'$ relative to $P'$.
\end{enumerate}
\end{definition}

\begin{remark}
By definition, equivalent nestled $(n,\ell)$-actions $h$ and $h'$ on $F$ are conjugate in $\Mod(F)$. Since conjugate homeomorphisms have the same fixed point data, we have that $\theta(h) = \theta(h')$.
\end{remark}


\section{Compatible pairs and roots}
\label{sec:comp-pairs and roots}
Suppose that $C$ is a curve that separates a surface $F$ of genus $g$ into two subsurfaces. As mentioned earlier, the central idea is defining compatible nestled actions on the subsurfaces that ``fit together" to give a degree $n$ root of the Dehn twist $t_C$. We will show in Theorem~\ref{prop:main} that compatible pairs of equivalent actions correspond bijectively to conjugacy classes of roots of $t_C$.

\begin{notation}
Suppose that $C$ separates a closed orientable surface $F$ into subsurfaces of genera $g_1$ and $g_2$, where $g_1 \geq g_2$. Let $F_i$ denote the closed surface obtained by coning the subsurface of genus $g_i$. We will think of $F$ as $(F_1,C)\#(F_2,C)$, that is, the surface obtained by taking the connected sum of the $F_i$ along $C$. For the sake of convenience, we will denote this by $F = F_1 \#_C F_2$.
\end{notation}

\begin{definition}
\label{def:comp-pair}
Equivalence classes $[h_i]$ of nestled $(n_i,\ell_i)$-actions $h_i$ on closed oriented surfaces $F_i$ for $i=1,2$ are said to form a \textit{compatible pair} $([h_1],[h_2])$ if $\theta(h_1)+\theta(h_2)=2\pi/n \bmod 2\pi$.

The integer $n = lcm(n_1,n_2)$ is called the \textit{degree} of the compatible pair. We may treat $([h_1], [h_2])$ to as an unordered pair, since $([h_2], [h_1])$ is a compatible pair if and only if $([h_1], [h_2])$ is.
\end{definition}

\begin{lemma}
\label{lem1}
Let F be a compact orientable surface, possibly disconnected. If $h: F \rightarrow F$ is a homeomorphism such that $h^n$ is isotopic to
$id_F$, then $h$ is isotopic to a homeomorphism $j$ with $j^n = id_F$.
\end{lemma}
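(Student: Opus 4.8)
The plan is to reduce to the case where $F$ is connected and then appeal to the geometric realization of finite-order mapping classes, which is the cyclic case of the Nielsen realization theorem.

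\medskip
\noindent\textbf{Reducing to a connected surface.}
Since $h^n$ is isotopic to $\mathrm{id}_F$, the homeomorphism $h$ permutes the components of $F$ by a permutation whose order divides $n$; in particular $h^n$ maps each component onto itself. Fix an orbit $\{F_0,\dots,F_{k-1}\}$ of components, indexed so that $h(F_i)=F_{i+1}$ with subscripts read mod $k$, and note $k\mid n$. Put $\phi_0=\mathrm{id}_{F_0}$ and $\phi_i=h^i|_{F_0}\colon F_0\to F_i$ for $1\le i\le k-1$. Since $(h^k|_{F_0})^{n/k}=h^n|_{F_0}$ is isotopic to $\mathrm{id}_{F_0}$, the connected case (applied to $F_0$ with exponent $n/k$) would provide $j_0\colon F_0\to F_0$ isotopic to $h^k|_{F_0}$ with $j_0^{\,n/k}=\mathrm{id}_{F_0}$. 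I would then transport $j_0$ across the orbit by setting $j|_{F_i}=\phi_{i+1}\phi_i^{-1}$ for $0\le i\le k-2$ and $j|_{F_{k-1}}=j_0\,\phi_{k-1}^{-1}$. A short computation gives $j^k|_{F_0}=j_0$, whence $j^n=\mathrm{id}$ on the orbit; moreover $j|_{F_i}=h|_{F_i}$ for $i\le k-2$ and $j|_{F_{k-1}}$ is isotopic to $h|_{F_{k-1}}$, so $j\simeq h$ on the orbit. Performing this independently on each orbit of components reduces the lemma to the case of a connected $F$.

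\medskip
\noindent\textbf{The connected case.}
Here $[h]$ lies in the (possibly orientation-reversing) mapping class group of a connected compact orientable surface and satisfies $[h]^n=1$, so it has finite order $m\mid n$. If $\chi(F)<0$, the finite cyclic group $\langle[h]\rangle$ is geometrically realized: for a suitable hyperbolic structure on $F$ (with geodesic boundary when $\partial F\ne\emptyset$) there is a homeomorphism $j$ isotopic to $h$ that is an isometry, hence of order $m$. This is Nielsen's classical theorem for cyclic groups; for surfaces with boundary one may alternatively cap each boundary circle with a once-punctured disk, apply the realization theorem on the resulting punctured closed surface, and then restrict. Either way $j^n=(j^m)^{n/m}=\mathrm{id}_F$. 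The finitely many surfaces with $\chi(F)\ge 0$ I would handle directly: $\Mod^{\pm}(S^2)$ and $\Mod^{\pm}(D^2)$ have order $2$ and are realized by reflections; for $T^2$ a torsion element of $GL_2(\Z)$ is realized by a linear automorphism of $\mathbb{R}^2/\Z^2$; and for the annulus the Dehn twist generates an infinite cyclic direct factor onto which a torsion class projects trivially, leaving a finite part realized by flat isometries.

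\medskip
\noindent\textbf{Where the difficulty lies.}
The only substantial ingredient is the realization of finite cyclic mapping classes on hyperbolic surfaces, and that is the step I expect to be the main obstacle (it is, however, classical and can simply be cited). A secondary point needing a little care is moving between surfaces with boundary and closed or punctured surfaces when invoking that theorem, which is why I would route that step through capping off the boundary circles with punctured disks. Everything else --- the orbit bookkeeping and the four exceptional surfaces --- is routine.
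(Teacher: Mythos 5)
Your proof is correct and follows essentially the same route as the paper: reduce to a single orbit of components, realize the return map $h^k|_{F_0}$ by a finite-order homeomorphism via Nielsen--Kerckhoff, and then alter $h$ on just one component of the orbit so that the $n$-th power is exactly the identity --- your transported $j$ coincides with the paper's modified homeomorphism $H_1$. The only difference is that you spell out the connected case (boundary components and the exceptional surfaces) where the paper simply cites the Nielsen--Kerckhoff theorem, which is fine (minor quibble: in the free-isotopy mapping class group of the annulus the Dehn twist is actually trivial, so that case is even easier than your description suggests).
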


\begin{proof}
When $F$ is connected, this is the Nielsen-Kerchkoff theorem \cite{K1, K2, N1}. Suppose that $F$ is not connected. We may asssume that $h$ acts transitively on the set of components $F_1, F_2, ..., F_{\ell}$ of $F$. Choose notation so that $h\mid_{F_i} : F_i \rightarrow F_{i+1}$ and $h\mid_{F_{\ell-1}} : F_{\ell-1} \rightarrow F_1$. Since
$h^n = {(h^l)}^{n/\ell} \simeq {id}_F$, the Nielsen-Kerchkoff theorem implies that ${h^\ell}\mid_{F_1} \simeq j_1$ where $j_1$ is a homeomorphism on $F_1$ with
${j_1}^{n/\ell} = {id}_{F_1}$. Therefore, $id_{F_1} \simeq j_1 \circ {({h^\ell}\mid_{F_1})}^{-1}$ via an isotopy $K_t$. Define an isotopy $H_t$ of $h$ by
$H_t\mid_{F_i} = h$ for $1 \leq i \leq \ell-2$ and $H_{t}\mid_{F_{\ell-1}} = K_t \circ h\mid_{F_{\ell-1}}$.
Then, ${H_1}\mid_{F_{\ell-1}} = K_1 \circ h = j_1 \circ {({h^\ell}\mid_{F_1})^{-1}} \circ h$.
We see that ${(H_1\mid_{F_i})}^\ell = h^i \circ (j_1 \circ h^{1-\ell}) \circ h^{\ell-1-i} = h^i \circ j_1 \circ h^{-i}$ and
${(H_1\mid_{F_i})}^n = {({H_1\mid_{F_i}}^\ell)}^{n/\ell} = h^i \circ {j_1}^{n/\ell} \circ h^{-i} = h^i \circ h^{-i} = {id}_{F_i}$. The required homeomorphism is $j = H_1$.
\end{proof}

\begin{theorem}
Let $F = F_1 \#_C F_2$ be a closed oriented surface of genus $g \geq 2$. Then the conjugacy classes in $\Mod(F)$ of roots of $t_C$ of degree $n$ correspond to the compatible pairs $([h_1],[h_2])$ of equivalence classes of nestled $(n_i,\ell_i)$-actions $h_i$ on $F_i$ of degree $n$.
\label{prop:main}
\end{theorem}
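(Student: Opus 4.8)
The plan is to set up a bijection in both directions between conjugacy classes of degree-$n$ roots of $t_C$ and compatible pairs of equivalence classes of nestled actions, and then check that the two constructions are mutually inverse and well-defined on the relevant equivalence/conjugacy classes.

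\medskip

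\noindent\textbf{From a root to a compatible pair.} Suppose $h$ is a root of $t_C$ of degree $n$, so $h^n = t_C$ in $\Mod(F)$. First I would argue that $h$ can be isotoped so that it preserves $C$: since $h^n = t_C$ fixes $C$ up to isotopy and $h$ permutes the isotopy classes of the components of any $h$-invariant multicurve, the curve $C$ must be fixed (it is the unique curve in its isotopy class meeting the relevant combinatorial data, and $h$ cannot swap the two sides because they have genera $g_1 \ge g_2$ — if $g_1 > g_2$ this is immediate, and if $g_1 = g_2$ one rules out the swap because $h^n = t_C$ is orientation-preserving and fixes each side). Cutting along a small $h$-invariant annular neighborhood of $C$ and capping the two boundary circles with disks produces the closed surfaces $F_1, F_2$. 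The restriction of $h^n = t_C$ to each side is isotopic to the identity (the twist is supported in the annulus we removed), so $h|_{F_i}$ has finite order in $\Mod(F_i)$ after capping; by the Nielsen–Kerckhoff theorem (Lemma~\ref{lem1}) it is isotopic to an honest finite-order homeomorphism $h_i$ of $F_i$ of order $n_i \mid n$. The centers of the capping disks are fixed points, which we take as distinguished; counting the cone points of the quotient orbifolds gives the invariants $\ell_i$, so each $h_i$ is a nestled $(n_i,\ell_i)$-action. The key numerical point is that the twist $t_C$ near $C$ is recovered precisely when the rotation angles of $h$ on the two sides around $C$ combine to a single full Dehn twist, i.e. $\theta(h_1) + \theta(h_2) = 2\pi/n \bmod 2\pi$; this uses the standard local picture of how a fractional rotation on each side of an annulus concatenates, together with $n = \lcm(n_1,n_2)$ being forced by the order of $h$. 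Hence $([h_1],[h_2])$ is a compatible pair.

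\medskip

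\noindent\textbf{From a compatible pair to a root.} Conversely, given nestled $(n_i,\ell_i)$-actions $h_i$ on $F_i$ with $\theta(h_1)+\theta(h_2) = 2\pi/n \bmod 2\pi$, use Remark~\ref{rem:inv disk} to find an invariant disk $D_i \subset F_i$ around the distinguished fixed point on which $h_i$ acts as a rigid rotation by $\theta(h_i)$. Remove the interiors of these disks and glue the resulting boundary circles by an orientation-reversing diffeomorphism, interpolating the two rotations across a collar to build a homeomorphism $h$ of $F = F_1 \#_C F_2$ that restricts to $h_i$ away from the gluing region. A direct computation in the collar shows $h^n$ is, up to isotopy, supported in an annular neighborhood of $C$ and equals $t_C^{m}$ where the exponent $m$ is determined by $\frac{n}{n_1}\theta(h_1)\cdot\frac{n_1}{2\pi} + \frac{n}{n_2}\theta(h_2)\cdot\frac{n_2}{2\pi}$ reduced appropriately — the compatibility equation is exactly the statement that $m \equiv 1$, so $h^n = t_C$. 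Thus $h$ is a root of degree $n$.

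\medskip

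\noindent\textbf{Well-definedness and bijectivity.} Finally I would check that both maps descend to the respective equivalence classes and are mutually inverse. If $h$ and $h'$ are conjugate roots, the conjugating map can be isotoped to preserve $C$ and hence induces homeomorphisms of the $F_i$ realizing the equivalence of Definition~\ref{def:nestled eq pair} (condition (i) on distinguished fixed points comes from tracking the disk centers, condition (ii) from the relative isotopy); conversely, equivalences of the $h_i$ assemble to a conjugacy of the glued-up roots. That the two constructions invert each other is essentially formal once one checks that capping an invariant disk and then re-gluing returns the same action up to the allowed isotopies. I expect the main obstacle to be the first step of the first direction — making rigorous that a root is isotopic to one preserving $C$ (including the no-swap argument when $g_1=g_2$) and simultaneously controlling the isotopy so that the finite-order representatives on the two sides glue back to something genuinely isotopic to the original $h$, rather than merely to \emph{some} root. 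Careful bookkeeping of the annular neighborhood of $C$ and the induced rotation numbers, and an appeal to the fact that the Dehn twist generates an infinite cyclic subgroup of $\Mod$ so the exponent $m$ is well-defined, is what makes the angle-addition formula fall out cleanly.
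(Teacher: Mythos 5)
Your overall route is the same as the paper's: isotope $h$ to preserve an annular neighborhood of $C$, apply Nielsen--Kerckhoff (Lemma~\ref{lem1}) to get genuine finite-order restrictions, cone off to obtain nestled actions whose turning angles add to $2\pi/n$, and reverse the construction by removing invariant disks and regluing with a fractional twist. But there is one genuine gap: your argument that $h$ cannot interchange the two sides of $C$. You dismiss the case $g_1=g_2$ by saying that $h^n=t_C$ ``is orientation-preserving and fixes each side.'' That only rules out a swap when $n$ is odd: if $n$ is even, $h$ could exchange the sides while $h^n$ still preserves them, so no contradiction arises from looking at $h^n$ alone. Since $g_1=g_2$ is exactly the situation where a side-swapping candidate exists (e.g.\ already on the genus-$2$ surface), this case cannot be waved away.

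The paper closes this case with a parity argument on the twisting exponent that your sketch does not contain: if $h$ swaps the sides, then $h$ has even order, say $2n$, and $h^2$ preserves the sides and induces nestled actions on the two capped surfaces that are conjugate by $h$ itself, hence have \emph{equal} turning angles $\theta(h_1)=\theta(h_2)=\pi/n$ with $n_1=n_2=n$. Extending over the annulus $N$ then forces $h^{2n}=t_C^{\,2k+2jn}$ for some integers $k,j$ --- always an \emph{even} power of $t_C$ --- so $h^{2n}$ can never equal $t_C$ and no side-swapping root exists. You would need this (or an equivalent) argument to make the first direction complete. A second, smaller omission: in your well-definedness step for conjugate roots $h'=t h t^{-1}$, you should also treat the possibility that the conjugating map $t$ exchanges the sides (possible when $g_1=g_2$); the paper handles it by observing that then $h_1\sim h_2'$ and $h_2\sim h_1'$, so the \emph{unordered} compatible pairs agree, which is why compatible pairs are taken unordered in Definition~\ref{def:comp-pair}. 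The rest of your outline (the $\lcm$ computation via $h^s=t_C^d$ and the infinite order of $t_C$, the angle-addition computation in the collar, and the mutual inverseness of the two constructions) matches the paper's proof in substance.
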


\begin{proof}
We will first prove that every root of degree $n$ yields a compatible pair of $([h_1],[h_2])$ of degree $n$.

Fix a closed annulus neighborhood $N$ of $C$. Let $\widetilde{F}_i$ for $i =1,2$ be the components of $\overline{G-N}$, and denote the genus of $\widetilde {F_i}$ by $g_i$ . We fix coordinates on $F$ so that the subsurface $\widetilde{F}_1$ is to the left of $C$ as shown in Figure~\ref{surf}. By isotopy we may assume that $t_C(C)=C$, $t_C(N)=N$, and $t_C\vert_{\widetilde{F}_i }=id_{\widetilde{F}_i }$ for $i = 1,2$.

\begin{figure}[h]
\labellist
\small
\pinlabel $\widetilde{F}_1$ at 60 90
\pinlabel $\widetilde{F}_2$ at 230 90
\pinlabel ${N}$ at 145 95
\endlabellist
\centering
\includegraphics[width=55 ex]{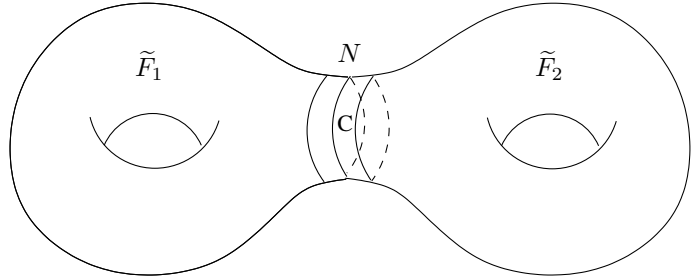}
\caption{The surface $F$ with the separating curve $C$ and the tubular neighborhood $N$ of $C$.}
\label{surf}
\end{figure}

Suppose that $h$ is an $n^{th}$ root of $t_C$. We have $t_C \simeq ht_Ch^{-1} \simeq t_{h(C)}$, which implies that $h(C)$ is isotopic to $C$. Changing $h$ by isotopy, we may assume that $h$ preserves $C$ and takes $N$ to $N$. Put $\widetilde{h_i}=h\vert_{\widetilde {F_i}}$ for $i=1,2$. Since $h^n\simeq t_C$ and both preserve $C$, there is an isotopy from $h^n$ to $t_C$ preserving $C$ and hence one taking $N$ to $N$ at each time. That is, ${\widetilde{h_1}}^n$ is isotopic to $id_{\widetilde{F}_1 }$ and ${\widetilde{h_2}}^n$ is isotopic to $id_{\widetilde{F}_2 }$ . By Lemma~\ref{lem1}, $\widetilde{h_i}$ is isotopic to a homeomorphism whose $n^{th}$ power is $id_{\widetilde{F}_i }$ for $i=1,2$. So we may change $\widetilde{h_i}$ and hence $h$ by isotopy to assume that ${\widetilde{h_i}}^n=id_{\widetilde {F_i}}$ for $i=1,2$.

Let $n_i$ be the smallest positive integer such that ${\widetilde{h_i}}^{n_i}=id_{\widetilde {F_i}}$ for $i = 1,2$. Let $s = lcm(n_1,n_2)$. Clearly, $s \vert n$ since $n_i \vert n$. Also, $h^s = id_{\widetilde{F}_1  \cup \widetilde{F}_2 }$ which implies that $h^s = {{t_C}^d}$ for some integer $d$. Hence, ${(h^s)}^{n/s} = {{({t_C}^d)}^{n/s}}$ i.e. $h^n = {t_C}^{dn/s}$. We get, $t_C = {t_C}^{dn/s}$ which implies that $dn/s = 1$ since no higher power of $t_C$ is isotopic to $t_C$. Hence, $d = 1$ and $n = s = lcm(n_1, n_2)$.

Assume for now that $h$ does not interchange the sides of $C$. We fill in the boundary circles of $\widetilde{F}_1$ and $\widetilde{F}_2 $ with disks to obtain the closed orientable surfaces $F_1$ and $F_2$ with genera $g_1$ and $g_2$ . We then extend $\widetilde {h_i}$ to a homeomorphism $h_i$ on $F_i$ by coning. Thus $h_i$ defines a $C_{n_i}$ action on $F_i$ where $n_i \vert n$, $C_{n_i}=\langle h_i\;\vert\;h_i^{n_i}=1\rangle$ for $i = 1,2$ and $lcm(n_1, n_2) = n$. Since the homeomorphism $h_i$ fixes the center point $P_i$ of the disk $\overline{F_i - \widetilde{F}_i }$, we choose $P_i$ as the distinguished fixed point for $h_i$. So $h_i$ defines a nestled $(n_i,\ell_i)$-action on $F_i$ for some $\ell_i$.

The orientation on $F$ restricts to orientations on the $F_i$, so that we may speak of rotation angles $\theta(h_i)$ for $h_i$. Then the rotation angle $\theta(h_i) = 2\pi k_i/n_i$ for some $k_i$ with $\gcd(k_i,n_i)=1$. As seen in Figure~\ref{fig:twist}, the difference in turning angles equals $2\pi k_2/n_2-(-2\pi k_1/n_1)=2\pi/n$, giving $\theta(h_1) + \theta(h_2) \equiv 2\pi/n \bmod 2\pi$. That is, $(h_1,h_2)$ is a compatible pair.

\begin{figure}[h]
\labellist
\small
\pinlabel $A$ [B] at 160 95
\pinlabel $P_1$ [B] at 73 80
\pinlabel $h_1(A)$ [B] at -13 142
\pinlabel $B$ [B] at 358 95
\pinlabel $h_2(B)$ [B] at 250 10
\pinlabel $P_2$ [B] at 260 105
\pinlabel $A$ [B] at 555 175
\pinlabel $B$ [B] at 555 30
\endlabellist
\centering
\includegraphics[width=60 ex]{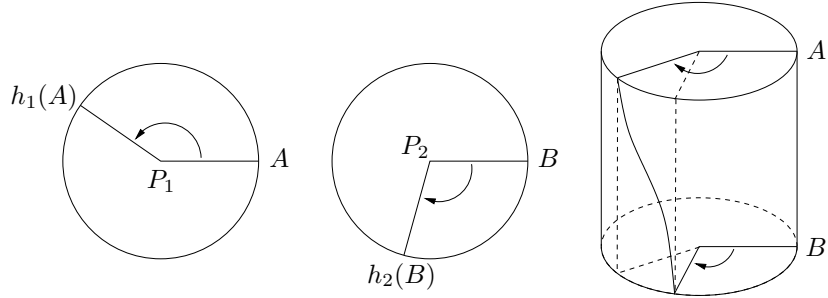}
\caption{The local effect of $h_1$ and $h_2$ on disk neighborhoods of $P_1$ and $P_2$ in $F_1$ and $F_2$, and the effect of $h$ on the neighborhood $N$ of $C$ in $F$. Only the boundaries of the disk neighborhoods are contained in $\widetilde{F}_i$, where they form the boundary of $N$. The rotation angle $\theta(h_1)$ is $2\pi {k_1}/{n_1}$ and the angle $\theta(h_2)$ is $2\pi k_2/n_2 = 2\pi (1/n - k_1/n_1)$.}
\label{fig:twist}
\end{figure}

Suppose now that $h$ interchanges the sides of $C$. Then $h$ must be of even order, say $2n$, and $h^2$ preserves the sides of $C$ and is of order $n$. Since the actions of $h^2\vert_{\widetilde{F_i}}$ on the $\widetilde{F_i}$ are conjugate by $h\vert_{\widetilde{F_1}\cup \widetilde{F_2}}$, these actions will induce conjugate $C_n$-actions on the coned surfaces $F_i$. Consequently, these induced actions will have the same turning angles at the centers $P_i$ of the coned disks of $F_i$. For this compatible pair of nestled $(n_i,\ell_i)$-actions, say $(h_1,h_2)$, associated with $h^2$, we must have $\theta(h_1) = \theta(h_2) = \pi/n$ and $n_1 = n_2 = n$. If we extend to $N$ using a simple left-handed twist, the twisting angle is $2\pi k/n$, and consequently $h^{2n}=t_{C}^{2k}$. Other extensions will differ from this by full twists, giving $h^{2n}=t_{C}^{2k+2jn}$ for some integer~$j$. In any case, $h^{2n}$ cannot equal $t_{C}$. This proves that $h$ cannot reverse the sides of $C$.

Suppose that we have roots $h$ and $h'$ that are conjugate in $\Mod(F)$, that is, there exists $t \in \Mod(F)$ such that $h' = t\circ h \circ t^{-1}$. Then ${(h')}^n = t\circ h^n \circ t^{-1}$, that is, $t_C = t\circ t_C \circ t^{-1} = t_{t(C)}$. This shows that $C$ and $t(C)$ are isotopic curves. Changing $t$ by isotopy, we may assume that $t(C) = C$ and $t(N)=N$. Let $t_i$, $h_i$ and $h_i'$ respectively denote the extensions of $t\vert_{\widetilde{F_i}}$, $h\vert_{\widetilde{F_i}}$ and $h'\vert_{\widetilde{F_i}}$ to $F_i$ by coning.

Assume for now that $t$ does not exchange the sides of $C$. Since $t$, $h$ and $h'$ all preserve $N$, we may assume that the isotopy from $t\circ h\circ t^{-1}$ to $h'$ preserves $N$, and consequently each $t_i\circ h_i\circ {t_i}^{-1}$ is isotopic to $h_i'$ preserving $P_i$. Since $t_i$ takes $P_i$ to $P_i$, $h_i$ and $h_i'$ are equivalent as nestled $(n_i,\ell_i)$-actions on $F_i$, so $h$ and $h'$ produce the same compatible pair $([h_1],[h_2])$.

Suppose that $t$ exchanges the sides of $C$. Then $g_1 = g_2$,  $h_{3-i}' \simeq t_i\circ h_i\circ {t_i}^{-1}$ and $t_i(P_i) = P_{3-i}$. So the actions $h_1$ and $h_2'$ are equivalent, as are actions $h_1'$ and $h_2$. Therefore, the (unordered) compatible pairs for the two roots are the same.

Conversely, given a compatible pair $([h_1],[h_2])$ of equivalence classes of nestled $(n_i,\ell_i)$-actions, we can reverse the argument to produce a root $h$. For let $P_i$ denote the distinguished fixed point of $h_i$ and let $p_i$ denote the corresponding cone point of order $n_i$ in the quotient orbifold $\O_i$. By Remark~\ref{rem:inv disk}, there exists an invariant disk $D_i$ for $h_i$ around $p_i$. Removing $D_i$ produces the surfaces $\widetilde{F}_i $, and attaching an annulus $N$ produces the surface $F$ of genus $g$. Condition (ii) on compatible pairs ensures that the rotation angles work correctly to allow an extension of $h_1\vert_{\widetilde{F}_1 }\cup h_2\vert_{\widetilde{F}_2 }$ to an $h$ with $h^n$ being a single Dehn twist about $C$.

It remains to show that the resulting root $h$ of $t_C$ is determined up to conjugacy in the mapping class group of $F$. Suppose that $h_i' \in [h_i]$. Let $P_i'$ denote the distinguished fixed point for $h_i'$, and let $D_i'$ be an invariant disk for $h_i'$ around $P_i'$. Removing the $D_i's$ produces surfaces $\widetilde{F}_i' \cong F_i$, for $i = 1,2$, and attaching an annulus $N'$ with a $1/n^{th}$ twist, extends $h_1'\vert_{\widetilde{F}_1'}\cup h_2'\vert_{\widetilde{F}_2'}$ to a homeomorphism $h'$ on a surface $F' \cong F$ of genus $g$. Since $h_i' \in [h_i]$, by definition, there exists $t_i$ such that $t_i(P_i) = P_i'$ and $t_i\circ h_i\circ {t_i}^{-1} \simeq h_i'$ rel $P_i'$ via an isotopy $H_i$ in $\Mod(F_i')$. Since $h_i$ and $h_i'$ have finite order and are conjugate up to isotopy by $t_i$, we may assume that $t_i(D_i) = D_i'$ and, identifying $F$ and $F'$ using $t$, that the isotopy $H_i$ from $t_i\circ h_i\circ{t_i}^{-1}$ to $h_i'$ is relative to $D_i$. With respect to this identification, we choose a $k:N \rightarrow N$ such that $h'\vert_{N} = k\circ h\vert_{N}\circ k^{-1}$. Now define $t:F\rightarrow F$ by $t\vert_{\widetilde{F_i}} = h_i\vert_{\widetilde{F_i}}$, and $t\vert_{N} = k$. Then $h' \simeq t\circ h \circ t^{-1}$ via an isotopy $H$ given by $H\vert_{\widetilde{F_i}} = H_i\vert_{\widetilde{F_i}}$, and $H\vert_{N} = id_{N}$.
\end{proof}

\section{Nestled $(n,\ell)$-actions and data sets}
\label{sec:nestlednl and data sets}
In this section, we introduce the language of data sets of degree $n$ in order to algebraically encode classes of nestled $(n,\ell)$-actions. We will also prove that equivalence classes of nestled $(n,\ell)$-actions actually correspond to data sets.

\begin{definition}
A \textit{data set} for $F$ is a tuple
$D = (n, \widetilde{g}, a; (c_1,x_1),\ldots,(c_{\ell}, x_{\ell}))$ where $n$, $\widetilde{g}$ and the $x_i$ are integers, $a$ is a residue class modulo $n$, and each $c_i$ is a residue class modulo $x_i$, such that
\begin{enumerate}
\item[(i)] $n \geq 1$, $\widetilde{g}\geq 0$, each $x_i>1$, and each $x_i$ divides $n$.
\item[(ii)] $\gcd(a,n) = \gcd(c_i,x_i) = 1$.
\item[(iii)] $a + \displaystyle\sum_{i=1}^{\ell} \frac{n}{x_i}c_i \equiv 0\bmod n$.
\end{enumerate}
The number $n$ is called the \textit{degree} of the data set. If $n = 1$, then we require that $a = 1$, and the data set is $D = (1, \widetilde{g}, 1;)$. The integer $g$ defined by \[g = \widetilde{g}n + \frac{1}{2}(1 - n) + \frac{1}{2}\displaystyle\sum_{i=1}^{\ell} \frac{n}{x_i}(x_i - 1)\] is called the \textit{genus} of the data set. We consider two data sets to be the same if they differ by reordering the pairs $(c_1,x_1),\ldots,(c_{\ell}, x_{\ell})$.
\end{definition}

\begin{remark}
\label{prop1}
For any data set $D = (n, \widetilde{g}, a; (c_1,x_1),\ldots,(c_{\ell}, x_{\ell}))$, \\$\lcm\{x_1,x_2,\ldots,x_n\} = n$. To see this, put $k = lcm(x_{1}, x_{2}, \ldots, x_{\ell})$. Since each $x_i\mid n$, $k\mid n$. So it remains to show that $n\mid k$. Condition $(iii)$ implies that
\[\frac{ak}{k} + \displaystyle\sum_{i=1}^{\ell}\frac{n(k/x_i)}{k}c_i \equiv 0\bmod n\ .\]
\noindent Multiplying by $k$ we get
\[ak + n\displaystyle\sum_{i=1}^{\ell}(k/x_i)c_i \equiv 0\bmod n\ .\]
\noindent Since $gcd(a, n) = 1$, we have $n\mid k$.
\end{remark}

We will prove in the following proposition that data sets of degree $n$ correspond to equivalence classes nestled-$(n,\ell)$ actions.

\begin{proposition}
\label{prop:nestled-data set}
Data sets of degree $n$ and genus $g$ correspond to equivalence classes of nestled $(n,\ell)$-actions on closed orientable surfaces of genus ~$g$.
\end{proposition}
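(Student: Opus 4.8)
The plan is to obtain this correspondence from the classification of regular cyclic branched coverings, phrased through Thurston's orbifold theory \cite{T1,S1}, along the lines of the nonseparating analogue in \cite{MK1}.

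First I would pass from a nontrivial nestled $(n,\ell)$-action $h$ on a closed orientable surface $F$ of genus $g$ to its quotient orbifold $\O = F/\langle h\rangle$. Since $F$ is oriented and $h$ is orientation-preserving, $\O$ is a closed orientable $2$-orbifold; let $\widetilde g \ge 0$ be the genus of its underlying surface. A point of $F$ has nontrivial stabilizer exactly when it lies over a cone point of $\O$, so by clause (iii) of the definition there are precisely $\ell+1$ cone points $p_0,p_1,\dots,p_\ell$, where $p_0$ is the image of the distinguished fixed point $P$ — whose stabilizer is all of $C_n$, so $p_0$ has order $n$ — and $p_i$ has order $x_i$ for $i\ge 1$. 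Each point stabilizer is a subgroup of the cyclic group $C_n$, so every $x_i$ is an integer with $1 < x_i \mid n$; together with $n\ge 1$ and $\widetilde g \ge 0$ this is clause (i). The branched covering $F\to\O$ is classified by a surjection $\phi\colon\pi_1^{\mathrm{orb}}(\O)\to C_n=\langle t\rangle$. Fix a standard presentation of $\pi_1^{\mathrm{orb}}(\O)$ with generators $\alpha_1,\beta_1,\dots,\alpha_{\widetilde g},\beta_{\widetilde g}$ and $\gamma_0,\gamma_1,\dots,\gamma_\ell$, relations $\gamma_i^{x_i}=1$ (with $x_0:=n$) and $\prod_{j}[\alpha_j,\beta_j]\prod_{i=0}^{\ell}\gamma_i=1$, and write $\phi(\gamma_0)=t^{a}$, $\phi(\gamma_i)=t^{(n/x_i)c_i}$. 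Because each $\phi(\gamma_i)$ must have order exactly $x_i$ (otherwise the cone order over $p_i$ would drop or disappear), we get $\gcd(a,n)=\gcd(c_i,x_i)=1$, which is clause (ii); applying $\phi$ to the surface relation and using that $C_n$ is abelian kills the commutators and yields $a+\sum_{i=1}^{\ell}(n/x_i)c_i\equiv 0\bmod n$, which is clause (iii); and the Riemann--Hurwitz identity $\chi(F)=n\,\chi^{\mathrm{orb}}(\O)$ is exactly the displayed genus formula from the definition of a data set, recovering $g$. Since $\phi(\gamma_0)=t^{a}$ with $\gcd(a,n)=1$ already generates $C_n$, surjectivity of $\phi$ is automatic, and the handle images $\phi(\alpha_j),\phi(\beta_j)$ can be normalized to the identity by handle-slide homeomorphisms of $\O$; hence the invariants that survive are $n$, $\widetilde g$, $a$ and the multiset $\{(c_i,x_i)\}$, i.e.\ a data set $D(h)$ of degree $n$ and genus $g$. (The trivial case $n=1$ is the trivial action, giving $D=(1,\widetilde g,1;)$.)

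Conversely, starting from an abstract data set $D=(n,\widetilde g,a;(c_1,x_1),\dots,(c_\ell,x_\ell))$ I would build the closed orientable $2$-orbifold $\O$ with underlying genus $\widetilde g$ and cone points of orders $n,x_1,\dots,x_\ell$, and define $\phi\colon\pi_1^{\mathrm{orb}}(\O)\to C_n$ on the standard generators by $\gamma_0\mapsto t^{a}$, $\gamma_i\mapsto t^{(n/x_i)c_i}$, and $\alpha_j,\beta_j\mapsto 1$. Clause (ii) makes these local rotations have orders exactly $n$ and $x_i$, so the associated orbifold covering is genuinely branched of those orders and its total space carries the trivial orbifold structure; clause (iii) makes $\phi$ respect the surface relation, so $\phi$ is a well-defined homomorphism; and $\gcd(a,n)=1$ makes $\phi$ surjective, so the cover $F\to\O$ is connected. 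Then $F$ is a closed orientable surface, of genus $g$ by Riemann--Hurwitz; the deck transformation corresponding to $t$ is an orientation-preserving $C_n$-action $h_D$ on $F$; it fixes a point over the order-$n$ cone point $p_0$, which I take as the distinguished fixed point; and the points with nontrivial stabilizer form exactly $\ell+1$ orbits, lying over $p_0,\dots,p_\ell$. Hence $h_D$ is a nestled $(n,\ell)$-action on a closed orientable surface of genus $g$.

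It then remains to check that $h\mapsto D(h)$ and $D\mapsto h_D$ descend to mutually inverse bijections; equivalently, that the data set is a \emph{complete} invariant of a nestled action up to equivalence. If $h,h'$ are nestled $(n,\ell)$-actions with $D(h)=D(h')$, their quotient orbifolds are homeomorphic by an orientation-preserving map carrying $p_0$ to $p_0'$, and after identifying them this way and normalizing the handle images to the identity as above, the two classifying surjections differ only by a permutation of the non-distinguished cone points that matches up equal pairs $(c_i,x_i)$; since each such change — handle slides and braids of cone points of equal order — is realized by an orientation-preserving homeomorphism of the base orbifold fixing $p_0$, there is an orientation-preserving orbifold homeomorphism $\bar t\colon\O\to\O'$ with $\bar t(p_0)=p_0'$ intertwining $\phi$ with $\phi'$, and a lift of $\bar t$ to the covers is an orientation-preserving homeomorphism $t\colon F\to F'$ with $t(P)=P'$ and $tht^{-1}=h'$, i.e.\ an equivalence in the sense of Definition~\ref{def:nestled eq pair}. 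In the other direction, an equivalence between $h$ and $h'$ makes them conjugate in $\Mod(F)$ by a map taking $P$ to $P'$; as $h$ and $h'$ have finite order, the Nielsen realization results \cite{K1,K2,N1} (cf.\ Lemma~\ref{lem1}) upgrade this to a genuine orientation-preserving conjugacy, which descends to an orbifold homeomorphism fixing the distinguished cone point and carrying the covering data, so $D(h)=D(h')$. Composing the two constructions then returns an equivalent action, respectively the same data set. I expect the crux to be exactly this completeness claim: it rests on Nielsen realization — used both to move between conjugacy in $\Homeo^+(F)$ and in $\Mod(F)$ for finite-order maps and to replace ``$tht^{-1}$ isotopic to $h'$'' by an honest conjugacy — and on the combinatorial classification of epimorphisms $\pi_1^{\mathrm{orb}}(\O)\to\Z/n$ up to the mapping class group of $\O$, in particular the handle-slide normalization of the $\phi(\alpha_j),\phi(\beta_j)$, where the presence of a cone point of full order $n$ (which makes $\phi(\gamma_0)$ a generator, so that surjectivity and the normalization come for free) is what makes the argument go through; a subsidiary fussier point, needed for Theorem~\ref{thm1} but not here, is pinning down the orientation conventions relating $a$ to the turning angle of $h$ at $P$.
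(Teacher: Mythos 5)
Your proposal is correct and follows essentially the same route as the paper: quotient orbifold, the presentation of $\pi_1^{orb}(\O)$ and the classifying epimorphism to $C_n$ giving conditions (i)--(iii) and the genus via Riemann--Hurwitz, the reverse construction from a data set, and completeness via Nielsen-type rigidity for finite-order maps. The only difference is cosmetic: where the paper simply cites Nielsen and Edmonds (and the argument of \cite{MK1}) for the statement that equal fixed-point data forces an orientation-preserving conjugacy carrying $P$ to $P'$, you sketch the underlying normalization of the epimorphism (handle slides and braiding of equal cone points, with the distinguished order-$n$ cone point making surjectivity and the choice of lift automatic), which is the same content.
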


\begin{proof}
Let $h$ be a nestled-$(n,\ell)$ action on a closed orientable surface $F$ of genus $g$. Let $\O$ be the quotient orbifold for the action and let $\widetilde{g}$ be the genus of its underlying $2$-manifold. Let $P$ be the distinguished fixed point of $h$ and let $p$ be the cone point in $\O$ of order $n$ that is its image in $\O$. Let $p_1,\ldots\,$, $p_{\ell}$ be the other possible cone points of $\O$, if any.

\begin{figure}[h]
\labellist
\small
\pinlabel $p$ [B] at 568 75
\pinlabel $p_1$ [B] at 525 97
\pinlabel $p_2$ [B] at 463 102
\pinlabel $\alpha$ [B] at 608 105
\pinlabel $\gamma_1$ [B] at 550 142
\pinlabel $\gamma_2$ [B] at 463 150
\endlabellist
\centering
\includegraphics[width=65 ex]{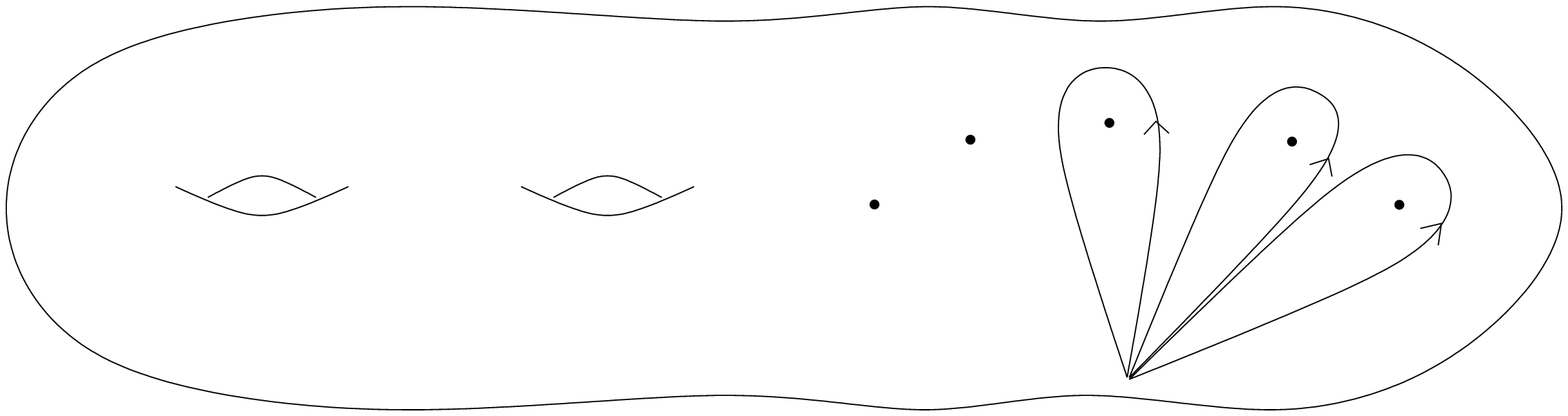}
\caption{The orbifold $\O$}
\label{fig:orb}
\end{figure}

Figure~\ref{fig:orb} shows a generator $\alpha$ of the orbifold fundamental group $\pi_1^{orb}(\O)$ that goes around the point $p$, and generators $\gamma_{i}, 1 \leq i \leq \ell$ going around $p_i$. Let $a_i$ and $b_i$, $1\leq j\leq \widetilde{g}$ be standard generators of the ``surface part'' of $\O$, chosen to give the following presentation of $\pi_1^{orb}(\O)$:

\begin{gather*}
\pi_1^{orb}(\O_i)=\langle \alpha, \gamma_1,\ldots, \gamma_{\ell},
a_1,b_1,\ldots, a_{\widetilde{g}}, b_{\widetilde{g}}\;\vert\;\\
{\alpha}^{n}=\gamma_1^{x_1}=\cdots =\gamma_i^{x_i}=1,\;
\alpha \gamma_1 \cdots \gamma_{\ell}=\prod_{1=1}^{\widetilde{g}}[a_i,b_i]\;\rangle.
\end{gather*}

From orbifold covering space theory ~\cite{T1}, we have the following exact sequence:
\[ 1 \longrightarrow \pi_1(F) \longrightarrow \pi_1^{orb}(\O)
\stackrel{\rho}{\longrightarrow} C_n \longrightarrow 1\ .\]

\noindent The homomorphism $\rho$ is obtained by lifting path representatives of elements of $\pi_1^{orb}(\O)$--- these do not pass through the cone points so the lifts are uniquely determined.

For $1\leq i\leq l$, the preimage of $p_i$ consists of $n/x_i$ points cyclically permuted by $h$, where $x_i$ is the order of the stabilizer of each point in the preimage of $p_i$. Each of the points has stabilizer generated by $h^{n/x_i}$. Its rotation angles must be the same at all points of the orbit, since its action at one point is conjugate by a power of $h$ to its action at each other point. So the rotation angle at each point is of the form $2\pi c_i'/x_i$, where $c_i'$ is a residue class modulo $x_i$ and $\gcd(c_i',x_i) = 1$. Lifting the $\gamma_i$, we have that $\rho_1(\gamma_i)= {h}^{(n/x_i)c_i}$ where $c_ic_i' \equiv 1 \bmod x_i$.

Finally, we have  $\rho(\prod_{i=1}^{\widetilde {g}}[a_i,b_i])=1$, since $C_n$ is abelian, so
\[ 1=\rho_i(\alpha\gamma_1\cdots \gamma_{\ell})= t^{a+(n/x_1)c_1+\cdots+ (n/x_i)c_i} \]
\noindent
giving \[a + \displaystyle\sum_{i=1}^{\ell} \frac{n}{x_i}c_i \equiv 0\bmod n\ .\]

The fact that the data set $D$ has genus equal to $g$ follows easily from the multiplicativity of the orbifold Euler characteristic for the orbifold covering $F\to \O$:
\begin{equation}
\frac{2-2g}{n} = 2 - 2\widetilde{g} + \left(\frac{1}{n}-1\right) + \sum_{i=1}^{\ell} \left(\frac{1}{x_i}-1\right)
\label{chi:orb}
\end{equation}
\noindent Thus, $h$ gives a data set $D = (n, \widetilde{g}, a; (c_1,x_1),\ldots,(c_{\ell},x_{\ell}))$ of degree $n$ and genus $g$.

Consider another nestled $(n,\ell)$-action $h'$ in the equivalence class of $h$ with a distinguished fixed point $P'$. Then by definition there exists an orientation-preserving homeomorphism $t \in \Mod(F)$ such that $t(P)=P'$ and $th't^{-1}$ is isotopic to $h$ relative to $P$. Therefore, the two actions will have the same fixed point data and hence produce the same data set $D$.

Conversely, given a data $D=(n, \widetilde{g}, a; (c_1,x_1),\ldots,(c_{\ell},x_{\ell}))$, we can reverse the argument to produce an equivalence class of a nestled $(n,\ell)$-action $h$ on a surface $F$ of genus $g$. We construct the orbifold $\O$ and representation $\rho\colon \pi_1^{orb}(\O)\to C_n$. Any finite subgroup of $\pi_1^{orb}(\O)$ is conjugate to one of the cyclic subgroups generated by $\alpha$ or a $\gamma_i$, so condition (ii) in the definition of the data set ensures that the kernel of $\rho$ is torsionfree. Therefore the orbifold covering $F \to \O$ corresponding to the kernel is a manifold, and calculation of the Euler characteristic shows that $F$ has genus $g$.

It remains to show that the resulting action on $F$ is determined up to our equivalence in $\Mod(F)$. Suppose that two actions $h$ and $h'$ on $F$ with distinguished fixed points $P$ and $P'$ have the same data set $D$. $D$ encodes the fixed-point data of the periodic transformations $h$. By a result of J. Nielsen~\cite{N1} (see also A. Edmonds~\cite[Theorem 1.3]{AE}), $h$ and $h'$ have to be conjugate by an orientation-preserving homeomorphism $t$. As in the proof of Theorem 1.1 in~\cite{MK1}, $t$ may be chosen so that it preserves $t(P)=P'$. Thus $D$ determines $h$ up to equivalence.
\end{proof}

Proposition~\ref{prop:nestled-data set} enables us to view equivalence classes of nestled $(n,\ell)$-actions simply as data sets.

\begin{notation}
We will denote a data set of degree $n$ and genus $g$ by $D_{n,g,i}$, where $i$ is an index. The trivial data set $D = \{1,g,1;\}$, for any $g$, will be denoted by $D_{1,g}$.
\end{notation}

\begin{example}
\label{ex:nestled actions}
For every $g \geq 1$, below are examples of data sets that represent nestled $(n,2)$-actions, when $n$ is $2g+1$, $4g$ and $4g+2$:
\begin{enumerate}
\item[(i)] $D_{2g+1,g,1} = (2g + 1, 0, 1; (g, 2g+1), (g, 2g + 1))$.
\item [(ii)] $D_{4g,g,1} = (4g, 0, 1; (1, 2), (2g - 1, 4g))$.
\item [(iii)] $D_{4g+2,g,1} = (4g+2, 0, 1; (1, 2), (g, 2g + 1))$.
\end{enumerate}
\end{example}

\begin{remark}
\label{chi:orb1}
For the data set $D = (n, \widetilde{g}, a; (c_1,x_1),\ldots,(c_n, x_{\ell}))$ associated with a nestled $(n, \ell)$-action, Equation~\ref{chi:orb} in the proof of Proposition~\ref{prop:nestled-data set} gives the following inequality
\begin{equation}
\frac{1-2g}{n} = -(\ell - 1) - 2\widetilde{g} + \sum_{i=1}^{\ell} \frac{1}{x_i} \leq -(\ell - 1) + \sum_{i=1}^{\ell} \frac{1}{x_i}\ .\
\label{eq:nestled_action_riemann_hurwitz}
\end{equation}
\end{remark}

Let $\O$ be the quotient orbifold for a nestled $(n, \ell)$-action. Let $\alpha$ be a generator of $\O$ going around the distinguished order $n$ cone point and let $\gamma_1, \gamma_2, \ldots, \gamma_{\ell}$ be generators going around the other cone points. We have the exact sequence
\[ 1 \longrightarrow \pi_1(F) \longrightarrow \pi_1^{orb}(\O)\stackrel{\rho}{\longrightarrow} C_{n} \longrightarrow 1\ .\]

\begin{remark}
There exists no non-trivial action with $\ell = 0$. Suppose that we assume the contrary. Then $\O$ has a distinguished cone point of order $n$ and no
other cone points. Let $a_j$ and $b_j$, $1\leq j\leq \widetilde{g}$ be the standard generators of the ``surface part'' of $\O$. Then, the fundamental
group of $\O$ has the following representation
\[\pi_1^{orb}(\O)=\langle  \alpha, a_{1},b_{1},\ldots, a_{\widetilde{g}}, b_{\widetilde{g}}\;\vert{\alpha}^n = 1, \alpha = \prod_{j=1}^{\widetilde{g}}[a_j,b_j]\;\rangle\ .\]
Since $C_n$ is abelian, $\rho(\alpha)=\rho(\prod_{j=1}^ {\widetilde{g}}[a_j,b_j]) = 1$, which is impossible since $\rho$ has torsion free kernel.
\label{rk:no_ell_equal_zero_nestled_actions}
\end{remark}

\section{Data set pairs and roots}
\label{sec:data set pair-roots}

By Theorem~\ref{prop:main}, each conjugacy class of a root of $t_C$ in $\Mod(F)$ corresponds to a compatible pair $([h_1],[h_2])$ of (equivalence classes of) nestled actions, and by Proposition~\ref{prop:nestled-data set}, such a pair determines a pair $(D_1,D_2)$ of data sets. To determines which pairs arise, we must replace the geometric compatibility condition in Theorem~\ref{prop:main} by an algebraic compatibility condition on the corresponding data sets.

\begin{definition}
\label{def:dspair}
Two data sets $D_1 = (n_1, \widetilde{g_1}, a_1; (c_{11},x_{11}),\ldots,(c_{1\ell},x_{1\ell}))$ and $D_2 = (n_2, \widetilde{g_2}, a_2; (c_{21},x_{21}),\ldots,(c_{2m},x_{2m}))$ are said to form a \textit{data set pair} $(D_1, D_2)$ if
\begin{equation}
\label{dspair}
\frac{n}{n_1}k_1 + \frac{n}{n_2}k_2 \equiv 1\bmod n
\end{equation}
where $n = \lcm(n_1, n_2)$ and $a_ik_i \equiv 1 \bmod n_i$. Note that although the $k_i$ are only defined modulo $n_i$, the expressions $\frac{n}{n_i}k_i$ are
well-defined modulo $n$. The integer $n$ is called the \textit{degree} of the data set pair and $g = g_1 + g_2$ is called the \textit{genus} of the data set pair. We consider $(D_1, D_2)$ to be an unordered pair, that is, $(D_1, D_2)$ and $(D_2, D_1)$ are equivalent as compatible pairs.
\end{definition}

We can now reformulate Theorem~\ref{prop:main} in terms of data sets.

\begin{theorem}
Let $F = F_1 \#_C F_2$ be a closed oriented surface of genus $g \geq 2$. Then, data set pairs $(D_1,D_2)$ of degree $n$ and genus $g$, where $D_1$ is a data set of genus $g_1$ and $D_2$ is a data set of genus $g_2$, correspond to the conjugacy classes in $Mod(F)$ of roots of $t_C$ of degree $n$.
\label{thm1}
\end{theorem}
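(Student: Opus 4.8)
The plan is to deduce Theorem~\ref{thm1} by composing the two correspondences already in hand and then checking that, under them, the geometric compatibility condition turns into the arithmetic one. By Theorem~\ref{prop:main}, conjugacy classes in $\Mod(F)$ of degree-$n$ roots of $t_C$ are in bijection with compatible pairs $([h_1],[h_2])$ of equivalence classes of nestled $(n_i,\ell_i)$-actions $h_i$ on $F_i$ with $\lcm(n_1,n_2)=n$, and by Proposition~\ref{prop:nestled-data set} each $[h_i]$ corresponds to a data set $D_i$ of degree $n_i$ and genus $g_i$. Since $F=F_1\#_C F_2$ with $C$ essential forces $g=g_1+g_2$, the pair $(D_1,D_2)$ has degree $n$ and genus $g$, matching the bookkeeping in Definition~\ref{def:dspair}. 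Thus the only substantive point is to show that $\theta(h_1)+\theta(h_2)\equiv 2\pi/n \bmod 2\pi$ holds if and only if $\frac{n}{n_1}k_1+\frac{n}{n_2}k_2\equiv 1 \bmod n$, where $a_i$ is the first residue-class entry of $D_i$ and $a_ik_i\equiv 1\bmod n_i$.

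The key step is a local formula for the turning angle of a nestled action in terms of its data set: if $h$ is a nestled $(n,\ell)$-action with data set $D=(n,\widetilde g,a;\dots)$, then $\theta(h)=\frac{2\pi k}{n}\bmod 2\pi$ with $ak\equiv 1\bmod n$. To prove this I would return to the exact sequence $1\to\pi_1(F)\to\pi_1^{orb}(\O)\stackrel{\rho}{\longrightarrow}C_n\to 1$ from the proof of Proposition~\ref{prop:nestled-data set}, where $\rho(\alpha)=h^{a}$ for the generator $\alpha$ encircling the distinguished cone point $p$ of order $n$. Near $p$ the orbifold covering $F\to\O$ is the standard model of the disk double-covered by $C_n$ via $w\mapsto w^{n}$, so a lift of $\alpha$ based near the distinguished fixed point $P$ is carried to its endpoint by the deck transformation $\rho(\alpha)=h^{a}$, which geometrically rotates $T_PF$ by $2\pi/n$ in the chosen orientation. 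Hence $a\,\theta(h)\equiv 2\pi/n\bmod 2\pi$; writing $\theta(h)=2\pi m/n$ gives $am\equiv 1\bmod n$, i.e. $m\equiv k\bmod n$. This is the same inversion already visible at the subordinate cone points in that proof, where the rotation angle at an orbit of order $x_i$ is $2\pi c_i'/x_i$ with $c_ic_i'\equiv 1\bmod x_i$.

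Granting the turning-angle formula, the equivalence of the two conditions is immediate: $\theta(h_1)+\theta(h_2)\equiv 2\pi/n\bmod 2\pi$ reads $\frac{2\pi k_1}{n_1}+\frac{2\pi k_2}{n_2}\equiv\frac{2\pi}{n}\bmod 2\pi$, and multiplying through by $n/(2\pi)$ and using $n/n_i\in\Z$ yields exactly $\frac{n}{n_1}k_1+\frac{n}{n_2}k_2\equiv 1\bmod n$, which is congruence~(\ref{dspair}). I would also note that this translation respects the ``unordered pair'' identifications on both sides, and dispose of the degenerate case $n_i=1$ (then $a_i=k_i=1$, $\theta(h_i)=0$, and $\frac{n}{n_i}k_i\equiv 0\bmod n$, consistent). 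The main obstacle I anticipate is not depth but care with conventions: one must pin down the orientations so that the deck transformation $\rho(\alpha)$ rotates $T_PF$ by $+2\pi/n$ rather than $-2\pi/n$, and verify that this is compatible with the left-handed twist convention for $t_C$ used in the proof of Theorem~\ref{prop:main}, so that the right-hand side of~(\ref{dspair}) is $+1$ and not $-1$. Once the sign is fixed, the remainder is routine bookkeeping.
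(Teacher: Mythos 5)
Your proposal is correct and follows essentially the same route as the paper: compose Theorem~\ref{prop:main} with Proposition~\ref{prop:nestled-data set} and then check that the angle condition $\theta(h_1)+\theta(h_2)\equiv 2\pi/n \bmod 2\pi$ translates into the congruence~(\ref{dspair}) via $\theta(h_i)=2\pi k_i/n_i$ with $a_ik_i\equiv 1\bmod n_i$. Your deck-transformation justification of that turning-angle formula (where the paper simply asserts it, and modulo the slip of calling $w\mapsto w^n$ a ``double'' cover) and your attention to the orientation/sign conventions are consistent with, and if anything more detailed than, the paper's argument.
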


\begin{proof}
Let $h$ denote the conjugacy class of a root of $t_C$ of degree $n$ with compatible pair representation $([h_1],[h_2])$. From Proposition~\ref{prop:nestled-data set}, the $h_i$ correspond to data sets $D_i = (n_i, \widetilde{g_i}, a_i; (c_{i1},x_{i1}),\ldots,(c_{i\ell_i},x_{1\ell_i}))$. So it suffices to show that the geometric condition $\theta(h_1) + \theta(h_2) = 2\pi/n$ in Definition~\ref{def:comp-pair} is equivalent to the condition $\frac{n}{n_1}k_1 + \frac{n}{n_2}k_2 \equiv 1\bmod n$ in Definition~\ref{def:dspair}.

As in the proof of Proposition~\ref{prop:main}, let $P_i$ denote the center of the filling disk of the subsurface $\widetilde{F_i}$ of genus $g_i$. Choosing $P_i$ as the distinguished fixed point of $h_i$, we get that $\theta(h_i) = 2\pi k_i/n_i$, where $\gcd(k_i, n_i) = 1$ and $a_ik_i \equiv 1 \bmod n_i$. Since ${h}^n=t_{C}$, the left-hand twisting angle along $N$ is $2\pi/n$, which equals $2\pi k_2/n_2-(-2\pi k_1/n_1)=2\pi/n$, giving $\frac{n}{n_1}k_1 + \frac{n}{n_2}k_2 \equiv 1 \bmod n$. The converse is just a matter of reversing the argument.
\end{proof}

\begin{corollary}
Suppose that $F = F_1 \#_C F_2$. Then there always exists a root of the Dehn twist $t_C$ about $C$ of degree $lcm(4g_1, 4g_2 + 2)$.
\label{coro:rootorder}
\end{corollary}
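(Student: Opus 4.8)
The plan is to write down an explicit data set pair of degree $\lcm(4g_1,4g_2+2)$ and appeal to Theorem~\ref{thm1}. Set $n_1=4g_1$, $n_2=4g_2+2$, and $n=\lcm(n_1,n_2)$. On the genus $g_1$ side I would use a data set of the shape occurring in Example~\ref{ex:nestled actions}, namely $D_1=(4g_1,0,a_1;(1,2),(c_1,4g_1))$, and on the genus $g_2$ side $D_2=(4g_2+2,0,a_2;(1,2),(c_2,2g_2+1))$. For any residue $a_i$ coprime to $n_i$ (which forces $a_i$ odd), and with the mod $2$ entry set to $1$, condition (iii) of the data set definition determines $c_i$ uniquely; a one-line computation gives $\gcd(c_1,4g_1)=\gcd(c_2,2g_2+1)=1$, and since the genus formula does not involve the $a_i$ or $c_i$, the data set $D_i$ has genus $g_i$, exactly as in Example~\ref{ex:nestled actions}. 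Hence the only data left to pin down is the pair $(a_1,a_2)$, and the only condition it must satisfy is the pairing formula~(\ref{dspair}).

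Writing $a_ik_i\equiv 1\bmod n_i$, formula~(\ref{dspair}) reads $\frac{n}{n_1}k_1+\frac{n}{n_2}k_2\equiv 1\bmod n$ with each $k_i$ a unit mod $n_i$, so the crux of the argument is the purely number-theoretic claim that such units $k_1,k_2$ exist. I would prove this one prime at a time and reassemble by the Chinese Remainder Theorem. Fix a prime $p\mid n$, let $p^{\alpha_i}$ be the exact power of $p$ dividing $n_i$, and put $\beta=\max(\alpha_1,\alpha_2)$, so $v_p(n/n_i)=\beta-\alpha_i$. If $\alpha_1\neq\alpha_2$, the factor with the larger exponent has $n/n_i$ a unit mod $p^{\beta}$ while the other contributes $0$ mod $p$; solving for that $k_i$ yields a unit automatically, and the other $k_j$ may be taken to be any unit. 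If $\alpha_1=\alpha_2=\beta$ (necessarily $\geq 1$) then both $n/n_i$ are units mod $p^{\beta}$; in that case $p$ must be odd (the next sentence shows $p=2$ cannot occur here), so one can pick $k_2\in\{1,2\}$ making $1-\frac{n}{n_2}k_2$ a unit mod $p$ and then solve for $k_1$, again a unit. The delicate prime is $p=2$, and here the choice of orders is exactly what makes things work: $v_2(n_1)=v_2(4g_1)=2+v_2(g_1)\geq 2$ while $v_2(n_2)=v_2(4g_2+2)=1$, so $\alpha_1>\alpha_2$ and we are in the first, easy case. Were these two $2$-adic valuations equal to some $t\geq 1$, the left side of the congruence would be forced even while the right side is odd, so no solution would exist; this is precisely why the target orders must be $4g_1$ and $4g_2+2$ and not, say, $2g_1$ and $2g_2+1$ or $4g_1$ and $4g_2$. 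Assembling the local choices by CRT gives the required $k_1\bmod n_1$ and $k_2\bmod n_2$, hence $a_1,a_2$.

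With these $a_i$ fixed, $(D_1,D_2)$ satisfies~(\ref{dspair}) by construction, has degree $\lcm(n_1,n_2)=\lcm(4g_1,4g_2+2)$ and genus $g_1+g_2=g$, with $D_i$ of genus $g_i$. By Theorem~\ref{thm1} it corresponds to a conjugacy class of roots of $t_C$ of that degree, and the corollary follows.

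I expect the real content to be the number-theoretic claim in the second paragraph, and in particular the observation that $v_2(4g_1)\neq v_2(4g_2+2)$; the shape and existence of the data sets, the coprimality and genus bookkeeping, and the final appeal to Theorem~\ref{thm1} are routine.
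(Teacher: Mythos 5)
Your proposal is correct and follows the same overall route as the paper: pick $n_1=4g_1$, $n_2=4g_2+2$, write down genus-$g_i$ data sets modeled on Example~\ref{ex:nestled actions}, reduce everything to finding units $k_1\bmod n_1$, $k_2\bmod n_2$ satisfying $\frac{n}{n_1}k_1+\frac{n}{n_2}k_2\equiv 1\bmod n$, and invoke Theorem~\ref{thm1}. The one real difference is how that number-theoretic step is handled: the paper dispatches it by quoting Lemma~7.1 of~\cite{MK1}, using that $\frac{n}{n_1}$ and $\frac{n}{n_2}$ are coprime and not both odd to choose a B\'ezout pair $(p,q)$ with $\gcd(p,n_1)=\gcd(q,n_2)=1$, whereas you prove the statement from scratch, one prime at a time, assembling by CRT. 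Your observation that $v_2(4g_1)\geq 2>1=v_2(4g_2+2)$ is exactly the ``not both odd'' hypothesis of the cited lemma in disguise, and your remark that equal $2$-adic valuations would make the congruence unsolvable is the same parity obstruction that appears later as Proposition~\ref{prop:norootorder}. So you trade a citation for a short self-contained local argument; both are valid, and your version has the small advantage of making the role of the orders $4g_1$ and $4g_2+2$ (rather than, say, $4g_1$ and $4g_2$) completely transparent. Your bookkeeping on the data sets is also fine: with the cone pair $(1,2)$ fixed, condition (iii) determines $c_i$, the coprimality $\gcd(c_1,4g_1)=\gcd(c_2,2g_2+1)=1$ checks out, and the genus formula, which ignores $a_i$ and $c_i$, gives genus $g_i$ exactly as in Example~\ref{ex:nestled actions}.
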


\begin{proof}
As in Theorem~\ref{thm1}, let $\widetilde{F}_i $ denote the subsurfaces obtained by cutting $F$ along $C$, and let $F_i$ denote the surfaces
obtained by adding disks to the $F_i$. Let $n_1 = 4g_1$ and $n_2 = 4g_2 +  2$. From Example~\ref{ex:nestled actions}, for any residue class $a_i$ modulo $n_i$ with $\gcd (a_i, n_i) = 1$, the data set $D_1 = (n_1, 0, a_1; (-a_1, 2g_1), (a_1, 4g_1))$ defines a nestled $(n_1, 2)$-action on a surface $F_1$ of genus $g_1$, and the data
set $D_2 =(n_2, 0, a_2; (a_2, 2), (a_2g_2, 2g_2 + 1))$ defines a nestled $(n_2, 2)$-action on $F_2$ of genus $g_2$.

Let $k_i$ denote the inverse of $a_i$ modulo $n_i$ and let $n = \lcm(n_1, n_2)$. We will now show that the $a_i$ can be selected so that Equation~\ref{dspair} is satisfied. In other words, this will prove that $D_1$ and $D_2$ form a data set pair $(D_1, D_2)$. Since $\frac{n}{n_1}$ and $\frac{n}{n_2}$ are relatively prime, there always exist integers $p$ and $q$ such that
\[\frac{n}{n_1}p + \frac{n}{n_2}q = 1\ .\]

In particular, since $\frac{n}{n_1}$ and $\frac{n}{n_2}$ are not both odd, by \cite[Lemma 7.1]{MK1}, $p$ and $q$ can be chosen so that $gcd(p, n_1) = gcd(q, n_2) = 1$. Let $k_1$ be the residue class of $p$ modulo $n_1$ and let $k_2$ be the residue class of $q$ modulo $n_2$. Taking modulo $n$, we get
\[\frac{n}{n_1}k_1 + \frac{n}{n_2}k_2 \equiv 1\bmod n\ .\]
Therefore, by Theorem~\ref{thm1}, there exists a root of $t_{C}$ of order $lcm(4g_1, 4g_2 + 2)$.
\end{proof}

\begin{corollary}
Let $F = F_1 \#_C F_2$ be a closed oriented surface of genus $g \geq 2$. Suppose that $M$ denotes the maximum degree of a root of the Dehn twist $t_C$ about $C$. Then $2g^2+2g \leq M$.
\end{corollary}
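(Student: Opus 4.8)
The plan is to apply Corollary~\ref{coro:rootorder} to a balanced genus partition. For a separating curve whose complementary pieces have genera $g_1 \geq g_2$, that corollary produces a root of $t_C$ of degree $\lcm(4g_1,\,4g_2+2)$, so it suffices to choose the partition of $g$ that makes this quantity equal to $2g^2+2g$. Since $g \geq 2$, I would take $g_1 = \lceil g/2\rceil$ and $g_2 = \lfloor g/2\rfloor$, so that $g_1 \geq g_2 \geq 1$ and $g_1 + g_2 = g$.

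The first step is the elementary observation that with this choice $\{4g_1,\,4g_2+2\} = \{2g,\,2g+2\}$. Indeed, if $g$ is even then $4g_1 = 2g$ and $4g_2+2 = 2g+2$, while if $g$ is odd then $4g_1 = 2(g+1) = 2g+2$ and $4g_2+2 = 2(g-1)+2 = 2g$. Hence Corollary~\ref{coro:rootorder} furnishes a root of $t_C$ of degree $\lcm(2g,\,2g+2)$.

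The second step is the computation $\gcd(2g,\,2g+2) = \gcd(2g,\,2) = 2$, which gives
\[
\lcm(2g,\,2g+2) = \frac{(2g)(2g+2)}{2} = g(2g+2) = 2g^2 + 2g .
\]
Thus $t_C$ has a root of degree $2g^2+2g$, and therefore $M \geq 2g^2+2g$, as desired.

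I do not expect a genuine obstacle here: once Corollary~\ref{coro:rootorder} is available the argument reduces to a choice of partition followed by a $\gcd$ computation. The only point that needs a moment of care is the parity case split in the first step; note also that it is essential (for odd $g$) that the factor $4g_1$ be attached to the \emph{larger} piece, since the alternative pairing $\lcm(4g_2,\,4g_1+2)$ is in general strictly smaller than $2g^2+2g$.
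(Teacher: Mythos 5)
Your proof is correct and follows essentially the same route as the paper: apply Corollary~\ref{coro:rootorder} to the balanced partition $g_1=\lceil g/2\rceil$, $g_2=\lfloor g/2\rfloor$ and compute $\lcm(2g,2g+2)=2g^2+2g$. In fact your unified treatment is slightly cleaner than the paper's, whose even-$g$ case contains a slip (it writes $\lcm(2g,2g+1)$ where the corollary actually yields $\lcm(2g,2g+2)$).
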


\begin{proof}
If $g$ is even, then Corollary~\ref{coro:rootorder} with $g_1 = g_2 = \frac{g}{2}$ gives a root of degree $\lcm(2g,2g+1) = 2g(2g+1)$. If $g$ is odd, then $g_1 = \frac{g+1}{2}$ and $g_2 = \frac{g-1}{2}$ gives a root of degree $\lcm(2(g+1),2g) \geq 2g(g+1)$.
\end{proof}

\section{Classification of roots for the closed orientable surfaces of genus 2 and 3}

\label{sec:genus2_classification}

\subsection{Surface of genus 2} Let $F$ denote the closed orientable surface of genus 2. Up to homeomorphism, there is a unique curve $C$ that separates $F$ into two subsurfaces of genus 1. Given a root of $t_C$, the process described in the proof of Theorem~\ref{thm1} produces orientation-preserving $C_{n_i}$ actions on the tori $F_i$ for $i = 1, 2$ with $n = lcm(n_1, n_2)$.

 If a cyclic group $C_n$ acts faithfully on a surface $F$ fixing a point $x_0$, then the map $C_n \longrightarrow Aut(\pi_1(F, x_0))$ is a monomorphism \cite[Theorem 2, p.43]{F1}. We also know that the group of orientation-preserving automorphisms $Aut^+(\pi_1(F_i, x_0)) \cong  Aut^+(\mathbb{Z} \times \mathbb{Z}) \cong SL(2, \mathbb{Z}) \cong \mathbb{Z}_4 \ast_{\mathbb{Z}_2} \mathbb{Z}_6$. Since any element of finite order of an amalgamated product $A \ast_C B$ is conjugate into one of the groups $A$ or $B$ ~\cite{MKS}, it can only be of order 2, 3, 4 or 6. Taking the least common multiple of any two of these orders gives 12 as the only other possibility for the order of a root of $t_C$. We summarize these inferences in the following corollary.

\begin{corollary}
Let $F$ be the closed orientable surface of genus 2 and $C$ a separating curve in $F$. Then a root of a Dehn twist $t_C$ about $C$ can only be of degree 2, 3, 4, 6, or 12.
\end{corollary}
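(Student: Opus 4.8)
The plan is to combine Theorem~\ref{thm1} with the known structure of finite cyclic subgroups of $SL(2,\mathbb{Z})$. By Theorem~\ref{thm1}, any root of $t_C$ of degree $n$ corresponds to a data set pair $(D_1,D_2)$ of degree $n=\lcm(n_1,n_2)$ in which each $D_i$ has genus $1$; equivalently, by Proposition~\ref{prop:nestled-data set} and Theorem~\ref{prop:main}, to a compatible pair of nestled $(n_i,\ell_i)$-actions $h_i$ on tori $F_i$ with $h_i$ of order exactly $n_i$. By the proof of Theorem~\ref{prop:main} a root cannot interchange the sides of $C$, so even though $g_1=g_2=1$ here, we really do obtain an honest faithful $C_{n_i}$-action on each torus $F_i$ fixing the distinguished point $P_i$. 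Hence it suffices to bound the order $n_i$ of a faithful, orientation-preserving $C_{n_i}$-action on the torus with a fixed point, and then run over all least common multiples.

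First I would invoke \cite[Theorem 2, p.43]{F1}: since $h_i$ generates a faithful $C_{n_i}$-action on $F_i$ fixing $P_i$, the induced map $C_{n_i}\to \Aut(\pi_1(F_i,P_i))$ is injective, and as $h_i$ is orientation preserving its image lies in $\Aut^+(\pi_1(F_i,P_i))\cong \Aut^+(\mathbb{Z}\times\mathbb{Z})=SL(2,\mathbb{Z})$. Thus $C_{n_i}$ embeds as a finite cyclic subgroup of $SL(2,\mathbb{Z})$.

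Next I would use the amalgam decomposition $SL(2,\mathbb{Z})\cong \mathbb{Z}_4\ast_{\mathbb{Z}_2}\mathbb{Z}_6$ together with the fact \cite{MKS} that every finite subgroup of an amalgamated product $A\ast_C B$ is conjugate into $A$ or into $B$. Therefore every finite cyclic subgroup of $SL(2,\mathbb{Z})$ has order dividing $4$ or $6$, so $n_i\in\{1,2,3,4,6\}$. Finally I would carry out the arithmetic enumeration: for $n_1,n_2\in\{1,2,3,4,6\}$ the value $n=\lcm(n_1,n_2)$ lies in $\{1,2,3,4,6,12\}$, the only value beyond $\{1,2,3,4,6\}$ being $12=\lcm(3,4)=\lcm(4,6)$. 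Discarding the degenerate case $n=1$, for which the ``root'' is $t_C$ itself, a root of $t_C$ can have degree only $2$, $3$, $4$, $6$ or $12$, which is the assertion. (Note the corollary claims only that these are the a priori possibilities, so no construction of a root of each degree is needed.)

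I do not expect a serious obstacle: every ingredient is a standard fact with a citation already in place. The one point requiring care is the reduction step itself — one must be sure each $n_i$ occurs as the order of a genuine finite cyclic action on a torus (rather than merely as an abstract divisor of $n$), which is precisely what Lemma~\ref{lem1}, Theorem~\ref{prop:main}, and the non-reversal of the sides of $C$ guarantee — after which the bound on orders in $SL(2,\mathbb{Z})$ and the enumeration of least common multiples are routine.
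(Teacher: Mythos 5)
Your argument is correct and is essentially the paper's own proof: reduce via Theorem~\ref{thm1} (and the compatible-pair correspondence) to faithful orientation-preserving cyclic actions with a fixed point on the two tori, embed each $C_{n_i}$ into $\Aut^+(\mathbb{Z}\times\mathbb{Z})\cong SL(2,\mathbb{Z})\cong \mathbb{Z}_4\ast_{\mathbb{Z}_2}\mathbb{Z}_6$ to get $n_i\in\{1,2,3,4,6\}$, and enumerate least common multiples to obtain $n\in\{2,3,4,6,12\}$. No gaps; this matches the paper's reasoning step for step.
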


Given below are the data set pairs that represent each conjugacy class of roots.

\noindent For $n = 2$:
\begin{enumerate}
\item[(i)] $(D_{2,1,1}, D_{1,1})$, where $D_{2,1,1} = (2, 0, 1; (1, 2), (1, 2), (1, 2))$.
\end{enumerate}
For $n = 3$:
\begin{enumerate}
\item[(i)] $(D_{3,1,1}, D_{1,1})$, where $D_{3,1,1} = (3, 0, 1; (1, 3), (1, 3))$.
\item[(ii)] $(D_{3,1,2}, D_{3,1,2})$, where $D_{3,1,2} = (3, 0, 2; (2, 3), (2, 3))$.
\end{enumerate}
For $n = 4$:
\begin{enumerate}
\item[(i)] $(D_{4,1,1}, D_{1,1})$, where $D_{4,1,1} = (4, 0, 1; (1, 2), (1, 4))$.
\item[(ii)] $(D_{4,1,2}, D_{2,1,1})$, where $D_{4,1,2} = (4, 0, 3; (1, 2), (3, 4))$.
\end{enumerate}
For $n = 6$:
\begin{enumerate}
\item[(i)] $(D_{6,1,1}, D_{1,1})$, where $D_{6,1,1} = (6, 0, 1; (1, 2), (1, 3))$.
\item[(ii)] $(D_{6,1,2}, D_{3,1,1})$, where $D_{6,1,2} = (6, 0, 5; (1, 2), (2, 3))$.
\item[(iii)] $(D_{3,1,2}, D_{2,1,1})$.
\end{enumerate}
For $n = 12$:
\begin{enumerate}
\item[(i)] $(D_{6,1,2}, D_{4,1,1})$.
\item[(ii)] $(D_{4,1,2}, D_{3,1,1})$.
\end{enumerate}

\noindent It can be shown using elementary calculations that these are the only possible roots for the various orders. For example, when $n = 12$, the condition $\lcm(n_1,n_2) = 12$ would imply that the set $\{n_1, n_2\}$ can be either $\{6, 4\}$ or $\{4, 3\}$. When $n_1 = 6$ and $n_2 = 4$, the data set pair condition gives $2k_1 + 3k_2 \equiv 1 \bmod 12$. Since $k_i$ is a residue modulo $n_i$, the only possible solution to this equation is $k_1 = 5$ and $k_2 = 1$. This would imply that $a_1 = 5$ and $a_2 = 1$ since $a_i$ is the inverse of $k_i$ modulo $n_i$. Geometrically, this represents the root $h$ of $t_C$ whose twisting angle on one side is $2\pi k_1/n_1 = 5\pi /3$ and on the other side of $C$ is $2\pi k_2/n_2 = \pi/2$. Each data set $D_i$ in the data set pair $(D_1, D_2)$ is then uniquely determined by condition $(iii)$ (for data sets) and the formula for calculating the genus $g_i$. Similar calculations can be used to determine all the data set pairs for the surface of genus 3.

\subsection{Surface of genus 3} Up to homeomorphism, the surface of genus $g = 3$ has a unique curve that separates the surface into two subsurfaces of genera 2 and 1.

Given below are the data set pairs that represent roots of various degrees.
For $n=2$:
\begin{enumerate}
\item[(i)] $(D_{1,2},D_{2,1,1})$.
\item[(ii)] $(D_{2,2,1},D_{1,1})$, where $D_{2,2,1} = (2,0,1;(1,2),(1,2),(1,2),(1,2),(1,2))$.
\item[(iii)] $(D_{2,2,2},D_{1,1})$, where $D_{2,2,2} = (2,1,1;(1,2))$.
\end{enumerate}
For $n=3$:
\begin{enumerate}
\item[(i)] $(D_{1,2},D_{3,1,1})$.
\item[(ii)] $(D_{3,2,1},D_{1,1})$, where $D_{3,2,1} = (3,0,1;(2,3),(2,3),(1,3))$.
\item[(iii)] $(D_{3,2,2},D_{1,1})$, where $D_{3,2,2} = (3,0,2;(1,3),(1,3),(2,3))$.
\end{enumerate}
For $n=4$:
\begin{enumerate}
\item[(i)] $(D_{1,2},D_{4,1,1})$.
\item[(ii)] $(D_{4,2,1},D_{1,1})$, where $D_{4,2,1} = (4,0,1;(1,2),(1,2),(3,4))$.
\item[(iii)] $(D_{4,2,2},D_{4,1,1})$, where $D_{4,2,2} = (4,0,3;(1,2),(1,2),(2,4))$.
\end{enumerate}
For $n=5$:
\begin{enumerate}
\item[(i)] $(D_{5,2,1},D_{1,1})$, where $D_{5,2,1} = (5,0,1;(1,5),(3,5))$.
\item[(ii)] $(D_{5,2,2},D_{1,1})$, where $D_{5,2,2} = (5,0,1;(2,5),(2,5))$.
\end{enumerate}
For $n=6$:
\begin{enumerate}
\item[(i)] $(D_{1,2},D_{6,1,2})$.
\item[(ii)] $(D_{6,2,1},D_{1,1})$, where $D_{6,2,1} = (6,0,1;(2,3),(1,6))$.
\item[(iii)] $(D_{2,2,1},D_{3,1,2})$.
\item[(iv)] $(D_{2,2,2},D_{3,1,2})$.
\item[(v)] $(D_{3,2,2},D_{2,1,1})$.
\item[(vi)] $(D_{3,2,1},D_{6,1,2})$.
\item[(vii)] ($D_{6,2,2}, D_{3,1,1})$, where $D_{6,2,2} = (6,0,5;(1,3),(5,6))$.
\end{enumerate}
For $n=8$:
\begin{enumerate}
\item[(i)] $(D_{8,2,1},D_{1,1})$, where $D_{8,2,1} = (8,0,1;(1,2),(3,8))$.
\item[(ii)] $(D_{8,2,2},D_{2,1,1})$, where $D_{8,2,2} = (8,0,5;(1,2),(7,8))$.
\item[(iii)] $(D_{8,2,3},D_{4,1,1})$, where $D_{8,2,3} = (8,0,7;(1,2),(5,8))$.
\item[(iv)] $(D_{8,2,4},D_{4,1,2})$, where $D_{8,2,4} = (8,0,3;(1,2),(1,8))$.
\end{enumerate}
For $n=10$:
\begin{enumerate}
\item[(i)] $(D_{10,2,1}, D_{1,1})$, where $D_{10,2,1} = (10,0,1;(1,2),(2,5))$.
\item[(ii)] $(D_{5,2,3}, D_{2,1,1})$, where $D_{5,2,3} = (5,0,3;(1,5),(1,5))$.
\item[(iii)] $(D_{5,2,4}, D_{2,1,1})$, where $D_{5,2,4} = (5,0,3;(3,5),(4,5))$.
\end{enumerate}
For $n=12$:
\begin{itemize}
\item[(i)] $(D_{4,2,2},D_{3,1,1})$.
\item[(ii)] $(D_{3,2,1},D_{4,1,2})$.
\item[(iii)] $(D_{4,2,1},D_{6,1,2})$.
\item[(iv)] $(D_{6,2,2}, D_{4,1,1})$.
\end{itemize}
For $n=15$:
\begin{itemize}
\item[(i)] $(D_{5,2,5}, D_{3,1,2})$, where $D_{5,2,5}= (5,0,3;(1,5),(1,5))$.
\item[(ii)] $(D_{5,2,6}, D_{3,1,2})$, where $D_{5,2,6}= (5,0,3;(3,5),(4,5))$.
\end{itemize}
For $n=20$:
\begin{itemize}
\item[(i)] $(D_{5,2,5},D_{4,1,1})$, where $D_{5,2,5} = (5,0,4;(4,5),(2,5))$.
\item[(ii)] $(D_{5,2,6},D_{4,1,1})$, where $D_{5,2,6} = (5,0,4;(3,5),(3,5))$.
\item[(iii)] $(D_{10,2,1}, D_{4,1,2})$, where $D_{10,2,1} = (10,0,7;(1,2),(4,5))$.
\end{itemize}
For $n=24$:
\begin{itemize}
\item[(i)] $(D_{8,2,4}, D_{3,1,2})$.
\item[(ii)] $(D_{8,2,3}, D_{6,1,1})$.
\end{itemize}
For $n = 30$:
\begin{itemize}
\item[(i)] $(D_{10,2,2},D_{3,1,1})$, where $D_{10,2,2} = (10,0,9;(1,2),(3,5))$.
\item[(ii)] $(D_{5,2,7},D_{6,1,2})$, where $D_{5,2,7} = (5,0,1;(1,5),(3,5))$.
\item[(iii)] $(D_{5,2,8},D_{6,1,2})$, where $D_{5,2,8} = (5,0,1;(2,5),(2,5))$.
\end{itemize}

\section{Spherical nestled actions}
\label{sec:spherical}
A spherical action is simply a nestled $(n,\ell)$-action whose quotient orbifold is a sphere. We will show in Proposition~\ref{prop:sphlb} that nestled $(n,\ell)$-actions must be spherical when $n$ is sufficiently large. This means that in order to derive bounds on $n$, it suffices to restrict attention to spherical actions. We will also derive several other results on spherical actions which we will be helpful in later sections.

\begin{definition}
A non-trivial nestled $(n, \ell)$-action is said to be \textit{spherical} if the underlying manifold of its quotient orbifold is topologically a sphere.
\end{definition}

\begin{example}
The actions in Examples ~\ref{ms} and ~\ref{ex:nestled actions} are spherical actions.
\end{example}

\begin{proposition}
\label{prop:sphlb}
If $n > \frac{2}{3}(2g-1)$, then every nestled $(n, \ell)$-action on $F$ is spherical.
\end{proposition}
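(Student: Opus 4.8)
The plan is to use the Riemann–Hurwitz type inequality recorded in Remark~\ref{chi:orb1}, namely
\[
\frac{1-2g}{n} = -(\ell - 1) - 2\widetilde{g} + \sum_{i=1}^{\ell} \frac{1}{x_i},
\]
and to argue by contradiction: suppose a nestled $(n,\ell)$-action on $F$ is \emph{not} spherical, so that $\widetilde{g}\geq 1$, and derive $n \leq \frac{2}{3}(2g-1)$. First I would recall from Remark~\ref{rk:no_ell_equal_zero_nestled_actions} that $\ell \geq 1$, so there is always at least one extra cone point besides the distinguished one, and each $x_i > 1$ forces $\frac{1}{x_i} \leq \frac{1}{2}$. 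Rearranging the displayed identity gives
\[
\frac{2g-1}{n} = (\ell - 1) + 2\widetilde{g} - \sum_{i=1}^{\ell} \frac{1}{x_i} = 2\widetilde{g} - 1 + \sum_{i=1}^{\ell}\Bigl(1 - \frac{1}{x_i}\Bigr),
\]
and the goal is to bound the right-hand side below by $\tfrac{3}{2}$ in the non-spherical case.

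The key estimate is: when $\widetilde{g}\geq 1$ we have $2\widetilde{g} - 1 \geq 1$, and since $\ell \geq 1$ the sum $\sum_{i=1}^\ell(1 - \tfrac{1}{x_i})$ contributes at least $1 - \tfrac{1}{2} = \tfrac{1}{2}$ (each term is at least $\tfrac{1}{2}$ because $x_i \geq 2$). Hence in the non-spherical case
\[
\frac{2g-1}{n} \geq 1 + \frac{1}{2} = \frac{3}{2},
\]
which rearranges to $n \leq \frac{2}{3}(2g-1)$. This is exactly the contrapositive of the statement: if $n > \frac{2}{3}(2g-1)$ then we cannot have $\widetilde{g}\geq 1$, so $\widetilde{g} = 0$ and the quotient orbifold has underlying manifold a sphere, i.e.\ the action is spherical. (The trivial action $n=1$ is excluded since $1 > \frac{2}{3}(2g-1)$ fails for $g \geq 2$, so there is nothing to check there; alternatively one notes the statement concerns nestled actions on $F$ of genus $g\geq 2$, for which $n>1$ automatically when $n > \frac{2}{3}(2g-1) \geq 2$.)

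I do not expect a serious obstacle here — the whole argument is a one-line manipulation of the orbifold Euler characteristic formula together with the two structural facts $\ell\geq 1$ and $x_i\geq 2$. The only point requiring a little care is making sure the inequality is strict/non-strict in the right places: the bound $n \leq \frac{2}{3}(2g-1)$ obtained above is non-strict, so its negation $n > \frac{2}{3}(2g-1)$ indeed forbids $\widetilde g \ge 1$, giving the desired conclusion. I would also double-check that I am using $\ell \geq 1$ legitimately (Remark~\ref{rk:no_ell_equal_zero_nestled_actions} rules out $\ell = 0$ for non-trivial actions), since that is what guarantees at least one term $(1-\tfrac1{x_i})$ in the sum beyond the "$2\widetilde g - 1$" piece. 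With these in hand the proof is complete in a few lines.
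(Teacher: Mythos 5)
Your argument is correct and is essentially the paper's own proof: both start from the orbifold Euler characteristic identity of Remark~\ref{chi:orb1}, use $x_i \geq 2$ and $\ell \geq 1$ (Remark~\ref{rk:no_ell_equal_zero_nestled_actions}), and conclude that $\widetilde{g} \geq 1$ forces $n \leq \frac{2}{3}(2g-1)$, which is the contrapositive of the statement. The only difference is cosmetic (you isolate $\frac{2g-1}{n}$ while the paper isolates $\widetilde{g}$), so nothing further is needed.
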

\begin{proof}
Let $D = (n, \widetilde{g}, a; (c_1,x_1),\ldots,(c_n, x_{\ell}))$ be the data set associated with a nestled $(n, \ell)$-action on $F$. Equation~\ref{eq:nestled_action_riemann_hurwitz} gives
\begin{equation}
\label{eq:g_tilda_equation}
\widetilde{g} = \frac{1}{2} + \frac{2g-1}{2n} - \frac{\ell}{2} + \frac{1}{2}\sum_{i=1}^{\ell} \frac{1}{x_i} \ ,
\end{equation}
Each $x_i \geq 2$, and by Remark~\ref{rk:no_ell_equal_zero_nestled_actions}, we must have $\ell \geq1$, so this becomes
\[ \widetilde{g} \leq \frac{1}{2} + \frac{2g-1}{2n} - \frac{\ell}{4} \leq \frac{1}{4} + \frac{2g-1}{2n}\ .\]
That is, $\widetilde{g}\geq 1$ can hold only when $n\leq (4g-2)/3$.
\end{proof}

\begin{remark}
\label{rem:no-sph-elleq1}
There exists no spherical nestled $(n, \ell)$-action with $\ell = 1$. Suppose we assume on the contrary that $\ell=1$. Then, Equation ~\ref{chi:orb} would imply that
\[ \frac{1-2g}{n} = \frac{1}{x_1} \ .\] This is impossible since $x_1 > 0$ and $g \geq 1$.
\end{remark}

\begin{proposition}
\label{prop:small_spherical_actions}
Suppose that a surface $F$ of genus $g$ has a spherical nestled $(n,\ell)$-action. Write the prime factorization of $n$ as $n=p^a{q_1}^{a_1}\cdots {q_k}^{a_k}$ where $p^a > {q_i}^{a_i}$ for each $i\geq 1$, and write $q$ for $\min\{p,q_1,\ldots,q_k\}$. If \[n>\frac{2g-1}{2-\frac{2}{q}-\frac{1}{p^a}}\ ,\] then $\ell = 2$.
\end{proposition}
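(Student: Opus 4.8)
The plan is to push on the orbifold Euler characteristic relation. Since the action is spherical, its quotient orbifold has $\widetilde g = 0$, so Equation~\ref{eq:nestled_action_riemann_hurwitz} collapses to the identity
\[
\ell - 1 - \sum_{i=1}^{\ell} \frac{1}{x_i} = \frac{2g-1}{n}.
\]
By Remark~\ref{rk:no_ell_equal_zero_nestled_actions} and Remark~\ref{rem:no-sph-elleq1}, a spherical nestled action must have $\ell \geq 2$, so it suffices to rule out $\ell \geq 3$. I would argue by contradiction, assuming $\ell \geq 3$ and deriving an upper bound on $n$ that contradicts the hypothesis.

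The key input is Remark~\ref{prop1}, which gives $\lcm\{x_1,\ldots,x_\ell\} = n$. Because the $p$-adic valuation of the lcm is the maximum of the valuations of the $x_i$ while each $x_i$ divides $n$, some $x_j$ must be divisible by the largest prime power $p^a$; after relabelling we may take $x_1 \geq p^a$. Every $x_i$ is a divisor of $n$ exceeding $1$, hence divisible by some prime of $n$ and therefore $x_i \geq q$, where $q = \min\{p,q_1,\ldots,q_k\}$. Since $\ell \geq 3$ there are at least two cone points besides the one forced to have order divisible by $p^a$, so
\[
\sum_{i=1}^{\ell} \frac{1}{x_i} \;\leq\; \frac{1}{p^a} + \frac{\ell-1}{q}.
\]
Substituting into the displayed identity and using $\ell - 1 \geq 2$ together with $1 - \tfrac1q > 0$ (as $q\geq 2$) yields
\[
\frac{2g-1}{n} \;\geq\; (\ell-1)\Bigl(1 - \frac{1}{q}\Bigr) - \frac{1}{p^a} \;\geq\; 2 - \frac{2}{q} - \frac{1}{p^a}.
\]

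The one point needing care — the closest thing to an obstacle — is the combinatorial bookkeeping above (one may use the ``$p^a$'' bound on only a single $x_i$) together with checking that $2 - \tfrac{2}{q} - \tfrac{1}{p^a} > 0$, so that the last inequality can legitimately be rearranged into a bound on $n$. Positivity is automatic: $q \geq 2$ gives $\tfrac{2}{q} \leq 1$ and $p^a \geq 2$ gives $\tfrac{1}{p^a} \leq \tfrac12$, so $\tfrac{2}{q} + \tfrac{1}{p^a} \leq \tfrac32 < 2$. Dividing then gives $n \leq \dfrac{2g-1}{2 - 2/q - 1/p^a}$, contradicting the hypothesis. Hence $\ell \leq 2$, and combined with $\ell \geq 2$ we conclude $\ell = 2$.
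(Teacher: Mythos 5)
Your proof is correct and follows essentially the same route as the paper: both use the Euler characteristic identity with $\widetilde g = 0$, the bound $\sum_i 1/x_i \leq 1/p^a + (\ell-1)/q$ coming from Remark~\ref{prop1} and $x_i \geq q$, and the remarks excluding $\ell = 0,1$; the paper simply solves the resulting inequality for $\ell$ and notes the right-hand side is less than $3$, whereas you phrase the same computation as a contradiction from $\ell \geq 3$.
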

\begin{proof}
Each $x_i\geq q$, and by Proposition~\ref{prop1}, at least one $x_i \geq p^a$. Using Equation~\ref{eq:g_tilda_equation} we have
\begin{gather*}
0 = \frac{1}{2} + \frac{2g-1}{2n} - \frac{\ell}{2} + \frac{1}{2}\sum_{i=1}^{\ell} \frac{1}{x_i} \leq \frac{1}{2} + \frac{1}{2p^a} + \frac{2g-1}{2n}
- \frac{\ell}{2} + \frac{\ell-1}{2q}\\
\ell \leq 1 + \frac{q}{(q-1)p^a} + \frac{q}{q-1}\left(\frac{2g-1}{n}\right)
\end{gather*}
The right-hand side of the latter inequality is less than $3$ when the inequality in the proposition holds. Therefore, by Remark ~\ref{rem:no-sph-elleq1}, $\ell = 2$. \end{proof}

\begin{corollary}
\label{coro:small_spherical_actions}
Suppose that a surface $F$ of genus $g$ has a spherical nestled $(n,\ell)$-action.
\begin{enumerate}
\item[(i)] If $n=2$, then $\ell = 2g+1$. In particular, there does not
exist a spherical nestled $(2,2)$-action.
\item[(ii)] If $n=3$, then $\ell = g+1$. There exists a spherical nestled
$(3,2)$-action if and only if $g=1$.
\item[(iii)] If $n$ is even, $n\geq 4$, and $n>\frac{4}{3}(2g-1)$, then
$\ell=2$.
\item[(iv)] If $n$ is odd, $n\geq 5$, and $n> \frac{15}{17}(2g-1)$, then
$\ell=2$.
\end{enumerate}
\end{corollary}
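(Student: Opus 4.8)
The plan is to treat this as four separate applications of the machinery already developed, specializing Equation~\ref{eq:g_tilda_equation} and Proposition~\ref{prop:small_spherical_actions} to the stated values of $n$. The underlying observation is that $\widetilde{g} = 0$ for a spherical action, so Equation~\ref{eq:g_tilda_equation} becomes the single constraint $1 + \frac{2g-1}{n} - \ell + \sum_{i=1}^{\ell} \frac{1}{x_i} = 0$, which I will use repeatedly together with Remark~\ref{prop1} (namely $\lcm\{x_i\} = n$) and Remark~\ref{rem:no-sph-elleq1} (no spherical action has $\ell = 1$).

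For part (i), with $n = 2$ every $x_i$ divides $2$ and exceeds $1$, so each $x_i = 2$; the constraint reads $1 + \frac{2g-1}{2} - \ell + \frac{\ell}{2} = 0$, i.e. $\ell = 2g+1$. Since $2g+1 \geq 3$ for $g \geq 1$, in particular $\ell \neq 2$, so no spherical nestled $(2,2)$-action exists. For part (ii), with $n = 3$ every $x_i = 3$, and the constraint gives $1 + \frac{2g-1}{3} - \ell + \frac{\ell}{3} = 0$, i.e. $\ell = g+1$; thus $\ell = 2$ exactly when $g = 1$ (and conversely the action in Example~\ref{ex:nestled actions}(i) realizes this). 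Parts (iii) and (iv) I would obtain by plugging the relevant prime-power data into Proposition~\ref{prop:small_spherical_actions}. For (iii), $n$ even means $q = 2$, and the worst case for the bound $\frac{2g-1}{2 - 2/q - 1/p^a} = \frac{2g-1}{1 - 1/p^a}$ is $p^a$ as small as possible; since $p^a$ is the largest prime-power factor and $n \geq 4$ is even, $p^a \geq 4$ forces $p = 2$, giving $\frac{2g-1}{1 - 1/4} = \frac{4}{3}(2g-1)$, so $n > \frac{4}{3}(2g-1)$ suffices. For (iv), $n$ odd and $n \geq 5$ forces $q \geq 3$; the largest prime-power factor $p^a$ is also odd and $\geq 5$, and examining $\frac{2g-1}{2 - 2/q - 1/p^a}$ one checks the denominator is minimized (hence the bound maximized) at $q = 3$, $p^a = 5$ among the admissible values, yielding denominator $2 - \tfrac23 - \tfrac15 = \tfrac{17}{15}$ and so $n > \frac{15}{17}(2g-1)$.

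The main point requiring care is the optimization over prime-power factorizations in parts (iii) and (iv): I must verify that among all factorizations $n = p^a q_1^{a_1}\cdots q_k^{a_k}$ consistent with the parity and size hypotheses, the specific pair $(q, p^a)$ I single out genuinely gives the weakest (largest) threshold in Proposition~\ref{prop:small_spherical_actions}, so that the single clean inequality in the corollary implies the hypothesis of that proposition in every case. For (iii) this is immediate since $q = 2$ always and $p^a \geq 4$ makes $1/p^a$ largest exactly when $p^a = 4$. For (iv) the subtlety is that decreasing $q$ and decreasing $p^a$ both enlarge the bound, so I should confirm that the configuration $q = 3$, $p^a = 5$ is actually attainable under "$n$ odd, $n \geq 5$" and dominates, e.g. by ruling out the apparent competitor $q = p^a = 3$ (excluded since then $n = 3$, contradicting $n \geq 5$ when $n$ is a prime power, or else $p^a > q_i^{a_i}$ fails) — the genuine extreme case is $n = 15 = 3 \cdot 5$. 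Everything else is routine substitution.
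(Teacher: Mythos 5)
Your overall route is the same as the paper's: parts (i) and (ii) by setting $\widetilde{g}=0$ in Equation~\ref{eq:g_tilda_equation} with all $x_i$ forced to equal $n$, and parts (iii) and (iv) by feeding the parity information into Proposition~\ref{prop:small_spherical_actions}. Parts (i), (ii) and (iv) are fine; in particular your observation for (iv) that odd $n\geq 5$ forces $q\geq 3$ and $p^a\geq 5$ (since a largest odd prime-power factor equal to $3$ would force $n=3$) is exactly the paper's argument, giving denominator $2-\tfrac{2}{3}-\tfrac{1}{5}=\tfrac{17}{15}$ and hence the threshold $\tfrac{15}{17}(2g-1)$.

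The gap is in (iii): your claim that ``$n\geq 4$ even forces $p^a\geq 4$'' is false for $n=6$, where the prime-power factors are $2$ and $3$ and the largest one is $p^a=3$. (A second, harmless, slip: $p^a\geq 4$ does not force $p=2$ --- e.g.\ $n=10$ has $p^a=5$ --- but only the inequality $1/p^a\leq 1/4$ is used, so that part of the argument survives.) For $n=6$ Proposition~\ref{prop:small_spherical_actions} with $q=2$, $p^a=3$ only yields $\ell=2$ under the stronger hypothesis $6>\tfrac{3}{2}(2g-1)$, not under the stated $6>\tfrac{4}{3}(2g-1)$, so your argument as written does not cover this value of $n$; this is precisely why the paper treats $n=6$ as a separate case before invoking the proposition with $q=2$, $p^a\geq 4$ for all other even $n\geq 4$. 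The omission is easily repaired: if $6>\tfrac{4}{3}(2g-1)$ then $2g-1<\tfrac{9}{2}$, so $g\leq 2$, and then $\tfrac{3}{2}(2g-1)\leq \tfrac{9}{2}<6$, so the proposition's hypothesis holds after all and $\ell=2$. With that one-line patch (or with the paper's explicit $n=6$ case analysis) your proof of (iii) is complete; every remaining even $n\geq 4$ really does have largest prime-power factor at least $4$, since otherwise all prime-power factors are at most $3$ and $n\in\{2,6\}$.
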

\begin{proof}
For (i), an Euler characteristic calculation shows that $\ell = 2g+1$ when $n=2$. These are exactly the hyperelliptic actions.

For (ii), when $n=3$, an Euler characteristic calculation shows that $\ell = g+1$, and as seen in Section~\ref{sec:genus2_classification}, there is a
nestled $(3,2)$-action on the torus.

For (iii), suppose first that $n=6$. In Proposition~\ref{prop:small_spherical_actions} we have $q=2$ and $p^a=3$, giving the conclusion that if $6>\frac{3}{2}(2g-1)$,
then $\ell=2$. The condition $6>\frac{3}{2}(2g-1)$ holds exactly when $g\leq 2$, so (iii) is true in this case. One can check that there exist nestled $(6,2)$-actions
exactly when $g\leq 2$. For the cases of (iii) other than $n=6$, we have $q=2$ and $p^a\geq 4$, and Proposition~\ref{prop:small_spherical_actions} gives the result.

For (iv), we have $q\geq 3$ and $p^a\geq 5$. Again Proposition~\ref{prop:small_spherical_actions} gives the result.
\end{proof}

\section{Bounds on the degree of a root}
\label{sec:degree-bounds}
In this section, we use the Theorem~\ref{thm1} and the results derived in Sections~\ref{sec:nestlednl} and~\ref{sec:spherical} to derive some results on the degree $n$ of a root. Among the results derived is an upper bound and a stable upper bound for $n$.

\begin{remark}
\label{rem:maxorder}
It is a well known fact \cite{H1} that the maximum order for an automorphism of a surface of genus $g$ is $4g + 2$. In Example~\ref{ex:nestled actions}, we showed that a nestled action of order $4g+2$ always exists.
\end{remark}

\begin{proposition}
\label{prop3}
There exists no nestled $(4g + 1, \ell)$-action.
\end{proposition}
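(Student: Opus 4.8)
The plan is to argue by contradiction. Suppose $h$ is a nestled $(4g+1,\ell)$-action; here $g\ge 1$ (nestled actions live on surfaces of genus at least $1$), so $n:=4g+1\ge 5$. By Proposition~\ref{prop:nestled-data set} the action corresponds to a data set $D=(n,\widetilde g,a;(c_1,x_1),\dots,(c_\ell,x_\ell))$, and I will derive a contradiction from the purely numerical part of $D$; the residue classes $a$ and $c_i$ will be irrelevant.

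First I would fix the shape of $D$. Since $4g+1>\tfrac23(2g-1)$, Proposition~\ref{prop:sphlb} forces $h$ to be spherical, so $\widetilde g=0$. Since $n$ is odd, $n\ge 5$, and $4g+1>\tfrac{15}{17}(2g-1)$, Corollary~\ref{coro:small_spherical_actions}(iv) gives $\ell=2$. Hence the quotient orbifold is a sphere with exactly three cone points, of orders $n$, $x_1$, and $x_2$.

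Next I would substitute $\widetilde g=0$ and $\ell=2$ into Equation~\ref{chi:orb}; a short computation gives $\tfrac1{x_1}+\tfrac1{x_2}=\tfrac{2g+2}{n}$. Because $n$ is odd, each $x_i$ is an odd divisor of $n$ exceeding $1$; writing $x_i=n/u_i$ with $u_i\mid n$ and clearing denominators, the relation becomes $u_1+u_2=2g+2$. By Remark~\ref{prop1} we have $\lcm(x_1,x_2)=n$, which is equivalent to $\gcd(u_1,u_2)=1$, so $u_1u_2\mid n$; write $n=u_1u_2m$ with $m\ge 1$. Since $n=4g+1$ we have $2g+2=\tfrac{n+3}2=\tfrac{u_1u_2m+3}2$, and hence the Diophantine relation $u_1u_2m=2u_1+2u_2-3$.

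Finally I would dispose of this relation. Order $u_1\le u_2$. The relation gives $u_1u_2m\le 4u_2-3<4u_2$, so $u_1m<4$. If $u_1=1$, then $u_2=2g+1$ must divide $n=2u_2-1$, which is impossible, as $\gcd(u_2,2u_2-1)=1$ would force $u_2=1$ and $g=0$. Otherwise $u_1$ is an odd divisor of $n$ with $u_1\ge 3$, so $u_1m=3$, giving $u_1=3$ and $m=1$; then $n=3u_2$ and the relation collapses to $u_2=3$, contradicting $\gcd(u_1,u_2)=1$. So no such $D$, hence no such action. The only step requiring genuine care is this last one --- ensuring that the inequality $u_1m<4$ together with the coprimality $\gcd(u_1,u_2)=1$ jointly exhaust all cases --- and that is where I would concentrate; everything before it is a direct application of results already in the paper.
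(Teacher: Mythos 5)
Your proof is correct, and its first two-thirds coincide with the paper's: sphericity forced by Proposition~\ref{prop:sphlb}, then $\ell=2$ (you cite Corollary~\ref{coro:small_spherical_actions}(iv), which applies since $4g+1>\tfrac{15}{17}(2g-1)$; the paper invokes Proposition~\ref{prop:small_spherical_actions} directly), and then the relation $\tfrac1{x_1}+\tfrac1{x_2}=\tfrac{2g+2}{4g+1}$ from Equation~\ref{chi:orb}. Where you diverge is the endgame. The paper splits on $x_1=3$ versus $x_1\geq 4$ and argues directly about integer solutions for $x_2$; you instead set $u_i=n/x_i$, translate Remark~\ref{prop1} into $\gcd(u_1,u_2)=1$ (valid, since $\lcm(n/u_1,n/u_2)=n/\gcd(u_1,u_2)$ when $u_i\mid n$), and reduce to the Diophantine relation $u_1u_2m=2u_1+2u_2-3$, which you dispose of by the bound $u_1m<4$ together with oddness of $n$ and the genus constraint $g\geq 1$; all of these steps check out. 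Your route is slightly more robust: the paper's claim that $x_2=3$ is the only integer solution when $x_1=3$ actually overlooks $(g,x_2)=(11,5)$, i.e.\ $\tfrac13+\tfrac15=\tfrac{24}{45}$, a case that must be (and implicitly is) excluded by the $\lcm$ condition of Remark~\ref{prop1}; in your formulation the coprimality $\gcd(u_1,u_2)=1$ handles such cases automatically, so nothing is glossed over. The trade-off is that the paper's finish is shorter and requires no reparametrization, while yours packages the divisibility constraints once and for all and makes the exhaustion of cases transparent.
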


\begin{proof}
By Proposition~\ref{prop:sphlb}, a nestled $(4g + 1, \ell)$-action must be spherical, and by Proposition~\ref{prop:small_spherical_actions}, $\ell = 2$. Therefore,
Equation ~\ref{chi:orb} in the proof of Theorem~\ref{thm1} simplifies to give
\[ \frac{2g + 2}{4g + 1} = \frac{1}{x_1} + \frac{1}{x_2}\ .\]

Without loss of generality, we may assume that $x_1 \leq x_2$. Since $x_i \mid 4g+1$, $x_i \geq 3$. If $x_1 = 3$, then
\[ x_2 = \frac{3(4g + 1)}{2g + 5} = 3\left(2 - \frac{9}{2g +  5}\right) \ .\] Since $x_2 = 3$ is the only integer solution for $x_2$, Proposition ~\ref{prop1} would imply that $n=3$ which contradicts that fact that $n = 4g+1$. If $x_1 \geq 4$, then we would have that
\[\frac{1}{2} < \frac{2 + 2g}{4g + 1} = \frac{1}{x_1} + \frac{1}{x_2} \leq \frac{1}{2} \ ,\]
which is not possible.
\end{proof}

\begin{proposition}
Let $F = F_1 \#_C F_2$ be a closed oriented surface of genus $g \geq 2$. Let $(D_1,D_2)$ be a data set pair corresponding to a root of $t_C$ of degree~$n$, and let $n_i$ be the degree of $D_i$ for $i=1,2$. Then the $n_i$ cannot both satisfy $n_i\equiv 2\bmod 4$.
\label{prop:norootorder}
\end{proposition}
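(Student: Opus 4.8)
The plan is to extract a parity obstruction directly from the data set pair condition, Equation~\ref{dspair}. Suppose, for contradiction, that $n_1\equiv 2\bmod 4$ and $n_2\equiv 2\bmod 4$. Then each $n_i$ is even but not divisible by $4$, so I can write $n_i=2m_i$ with $m_i$ odd. Put $\mu=\lcm(m_1,m_2)$, which is odd; then $n=\lcm(n_1,n_2)=2\mu$, so in particular $n$ is even.

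The key step is a parity count of the left-hand side of Equation~\ref{dspair}. First, $\frac{n}{n_i}=\frac{2\mu}{2m_i}=\frac{\mu}{m_i}$ is an integer, because $m_i\mid\mu$, and it is odd, being a quotient of odd integers. Second, since $\gcd(k_i,n_i)=1$ and $n_i$ is even, each $k_i$ is odd; changing the representative of $k_i$ modulo $n_i$ alters $\frac{n}{n_i}k_i$ by a multiple of $n$, hence by an even number, so the integer $\frac{n}{n_i}k_i$ is odd for every admissible choice. Therefore $\frac{n}{n_1}k_1+\frac{n}{n_2}k_2$ is a sum of two odd integers, hence even. But Equation~\ref{dspair} asserts that this sum is congruent to $1$ modulo $n$, so $n$ divides $\bigl(\tfrac{n}{n_1}k_1+\tfrac{n}{n_2}k_2\bigr)-1$, which is odd. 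Since $n$ is even, it cannot divide an odd integer, a contradiction.

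Finally I would invoke Theorem~\ref{thm1}: any root of $t_C$ of degree $n$ gives rise to a data set pair $(D_1,D_2)$ of degree $n$ satisfying Equation~\ref{dspair}, with $n_i=\deg D_i$, so the argument above shows that the $n_i$ cannot both be $\equiv 2\bmod 4$. The proof is entirely elementary and I do not anticipate any real obstacle; the only point worth a moment's care is checking that $\frac{n}{n_i}$ is simultaneously an integer and odd, which is immediate once $n=2\mu$ and $n_i=2m_i$ are written out, together with the observation that the parity of $\frac{n}{n_i}k_i$ is independent of the chosen representative of $k_i$.
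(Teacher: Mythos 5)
Your proof is correct and follows essentially the same route as the paper: both argue by parity, noting that since each $n_i\equiv 2\bmod 4$, the quotients $\frac{n}{n_1}$, $\frac{n}{n_2}$ and the residues $k_1$, $k_2$ are all odd, so the left-hand side of Equation~\ref{dspair} is even while $n$ is even, contradicting the congruence to $1$. Your write-up is just slightly more explicit (writing $n_i=2m_i$ and checking that the parity of $\frac{n}{n_i}k_i$ is independent of the representative of $k_i$) than the paper's version, but the argument is the same.
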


\begin{proof}
Suppose for contradiction that both $n_i$ satisfy $n_i\equiv 2\bmod 4$. Let $a_i$ denote the $a$-value of $D_i$, and let $k_i$ denote the inverse of $a_i$ modulo $n_i$.
Since $\gcd(k_i,n_i) = 1$, the $k_i$ must be odd. Also the fact that $\gcd(n_1, n_2) = 2k$ for some odd integer $k$ implies that $\frac{n}{n_i}$ is odd. From Equation
~\ref{dspair} for the data set pair $(D_1, D_2)$, we must have that\[ \frac{n}{n_1}k_1 + \frac{n}{n_2}k_2 \equiv 1 \bmod n\ ,\]  which is impossible since
$\frac{n}{n_1}k_1 + \frac{n}{n_2}k_2$ and $n$ are even.
\end{proof}

\begin{proposition}
Let $F = F_1 \#_C F_2$ be a closed oriented surface of genus $g \geq 2$. Suppose that $M(g_1,g_2)$ denotes the maximum degree of a root of the Dehn twist $t_C$ about $C$. Then $M(g_1,g_2) \leq 16g_1g_2 + 4(2g_1-g_2) - 2$.
\label{prop:root-bounds}
\end{proposition}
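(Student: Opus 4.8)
The plan is to translate, via Theorem~\ref{thm1}, a root of $t_C$ of degree $n$ into a data set pair $(D_1,D_2)$ in which $D_i$ has degree $n_i$ and genus $g_i$ and $n=\lcm(n_1,n_2)$, and then to bound $\lcm(n_1,n_2)$ using the constraints available on the individual $n_i$. By Proposition~\ref{prop:nestled-data set} each $D_i$ comes from a nestled action on the closed surface $F_i$ of genus $g_i\ge 1$, so the classical bound on the order of a periodic surface homeomorphism (Remark~\ref{rem:maxorder}) gives $n_i\le 4g_i+2$, and Proposition~\ref{prop3} rules out $n_i=4g_i+1$; hence either $n_i=4g_i+2$ or $n_i\le 4g_i$. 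The naive estimate $n\le n_1n_2\le(4g_1+2)(4g_2+2)$ exceeds the target $16g_1g_2+4(2g_1-g_2)-2$ by $6(2g_2+1)$, so the whole point is to recover this slack from two observations: (1) since $4g_i+2\equiv 2\bmod 4$, Proposition~\ref{prop:norootorder} forbids $n_1=4g_1+2$ and $n_2=4g_2+2$ from occurring together; and (2) if both $n_1$ and $n_2$ are even then $\gcd(n_1,n_2)\ge 2$, so $n=\lcm(n_1,n_2)\le n_1n_2/2$.

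Using (1), at most one $n_i$ attains $4g_i+2$, which organizes the argument into three cases. If $n_1\le 4g_1$ and $n_2\le 4g_2$, then $n\le 16g_1g_2$, and $16g_1g_2\le 16g_1g_2+4(2g_1-g_2)-2$ because $8g_1-4g_2-2\ge 0$ for $g_1\ge g_2\ge 1$. If $n_1=4g_1+2$, then $n_2\le 4g_2$: when $n_2\le 4g_2-1$ the bound $n\le(4g_1+2)(4g_2-1)$ already suffices (the difference from the target is $12(g_1-g_2)\ge 0$), and when $n_2=4g_2$ both degrees are even, so by (2) $n\le(4g_1+2)(2g_2)$, which lies below the target by an elementary computation using $g_1\ge g_2\ge 1$. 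The case $n_2=4g_2+2$ is symmetric: for $n_1\le 4g_1-1$ one has $n\le(4g_1-1)(4g_2+2)$, which is exactly the target, and for $n_1=4g_1$ observation (2) gives $n\le(4g_1)(2g_2+1)$, again below the target. Since every data set pair for a root of $t_C$ falls into one of these cases and $M(g_1,g_2)$ is the largest degree that occurs, this proves the bound.

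The computations in the case analysis are routine; the only real content is noticing that the exact amount of slack in the crude bound is accounted for, on one side, by the parity obstruction of Proposition~\ref{prop:norootorder} and, on the other, by the halving of the lcm when both degrees are even. I expect the point requiring the most care to be the boundary subcases where one degree equals $4g_i+2$ and the other equals $4g_i$, since there both savings must be invoked simultaneously and one must check that $\gcd(n_1,n_2)\ge 2$ really does hold ($4g_i$ and $4g_j+2$ both being even) and that the resulting estimate still beats the target for all $g_1\ge g_2\ge 1$, including the small genus cases.
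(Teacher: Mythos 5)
Your proof is correct and follows essentially the same route as the paper: bound each $n_i$ by $4g_i+2$ (Remark~\ref{rem:maxorder}), exclude $n_i=4g_i+1$ (Proposition~\ref{prop3}), exclude both $n_i=4g_i+2$ (Proposition~\ref{prop:norootorder}), and then bound $\lcm(n_1,n_2)$. Your explicit handling of the boundary subcases such as $\{4g_1+2,\,4g_2\}$ via $\lcm(n_1,n_2)\le n_1n_2/2$ when both degrees are even merely fills in a step the paper leaves implicit in asserting $M(g_1,g_2)\le\max\{(4g_1+2)(4g_2-1),\,(4g_1-1)(4g_2+2)\}$.
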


\begin{proof}
Let $n$ be the order of a root of $t_C$, given by a data set pair $(D_1,D_2)$.  We have $n= \lcm(n_1,n_2)$, where $n_i$ is the degree of $D_i$. By Remark~\ref{rem:maxorder}, each $n_i\leq 4g_i+2$. By Proposition~\ref{prop3}, neither $n_i=4g_i+1$, and by Proposition~\ref{prop:norootorder}, we cannot have both $n_1=4g_1+2$ and $n_2=4g_2+2$. If both $n_1=4g_1$ and $n_2=4g_2$, then $\lcm(n_1,n_2)=4\lcm(g_1,g_2)\leq 4g_1g_2 \leq 16g_1g_2 + 4(2g_1-g_2) - 2$. In general, since $g_1 \geq g_2$, we have that $M(g_1,g_2) \leq \max\{(4g_1+2)(4g_2-1), (4g_1-1)(4g_2+2)\}= 16g_1g_2 + 4(2g_1-g_2) - 2$.
\end{proof}

\begin{notation}
We will denote the upper bound $16g_1g_2 + 4(2g_1-g_2) - 2$ derived in Proposition~\ref{prop:root-bounds} by $U(g_1,g_2)$.
\end{notation}

\begin{theorem}
Let $F = F_1 \#_C F_2$ be a closed oriented surface of genus $g \geq 2$. Suppose that $n$ denotes the degree of a root of the Dehn twist $t_C$ about $C$. Then $n \leq 4g^2+2g$.
\label{thm:upper bound}
\end{theorem}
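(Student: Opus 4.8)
The plan is to read the bound straight off Proposition~\ref{prop:root-bounds}, which states that for a separating curve splitting $F$ into pieces of genera $g_1\geq g_2$ with $g_1+g_2=g$, every root of $t_C$ has degree $n\leq U(g_1,g_2)=16g_1g_2+4(2g_1-g_2)-2$. Hence the theorem reduces to the purely arithmetic inequality $U(g_1,g_2)\leq 4g^2+2g$, to be verified for every admissible splitting.

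To check this I would substitute $g=g_1+g_2$ and expand the difference. A direct computation gives
\[ 4g^2+2g-U(g_1,g_2)=4(g_1-g_2)^2-6(g_1-g_2)+2=2\bigl(2(g_1-g_2)-1\bigr)\bigl((g_1-g_2)-1\bigr). \]
Writing $d=g_1-g_2$, which is a nonnegative integer since $g_1\geq g_2$, the right-hand side equals $2(2d-1)(d-1)$: it is $2$ when $d=0$, it is $0$ when $d=1$, and it is positive for $d\geq 2$. Therefore $4g^2+2g-U(g_1,g_2)\geq 0$ in every case, and combining this with Proposition~\ref{prop:root-bounds} yields $n\leq U(g_1,g_2)\leq 4g^2+2g$.

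The only point requiring care is integrality: the quadratic $4d^2-6d+2$ is negative on the open interval $(1/2,1)$, so the argument genuinely uses that $g_1$ and $g_2$, hence $d$, are integers and avoid that interval. Beyond this there is no real obstacle; the subcases of Proposition~\ref{prop:root-bounds} in which the bound $U(g_1,g_2)$ is replaced by something smaller (for instance $4\lcm(g_1,g_2)$ when both degrees equal $4g_i$) only make the conclusion easier, so no additional work is needed.
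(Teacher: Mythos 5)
Your proposal is correct and follows essentially the same route as the paper: both reduce the theorem to the bound $n\leq U(g_1,g_2)=16g_1g_2+4(2g_1-g_2)-2$ from Proposition~\ref{prop:root-bounds} and then verify an elementary arithmetic inequality. The only difference is cosmetic --- the paper maximizes $U(g_1,g_2)$ as a quadratic in $g_1$ (with parity cases for $g$), whereas you factor the difference as $4g^2+2g-U(g_1,g_2)=2\bigl(2(g_1-g_2)-1\bigr)\bigl((g_1-g_2)-1\bigr)$ and use integrality of $g_1-g_2$, which is a slightly cleaner verification of the same fact.
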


\begin{proof}
Since $g_2 = g - g_1$, we have that $16g_1g_2 + 4(2g_1-g_2) - 2 = -16g_1^2 + g_1(16g+12) - (4g+2)$, which has its maximum when $g_1 = \frac{1}{8}(4g+3)$. The fact that $g_1$ is an integer implies that when $g$ is even, $g_1 = g_2 = g/2$, and when $g$ is odd, $g_1 = (g+1)/2$ and $g_2 = (g-1)/2$. So Proposition~\ref{prop:root-bounds} tells us that when $g$ is even, $n \leq M(g/2, g/2) \leq 4g^2 + 2g - 2$, and when $g$ is even, $n \leq M((g+1)/2, (g-1)/2) \leq 4g^2 + 2g$.
\end{proof}

\begin{notation}
We will denote the upper bound $4g^2+2g$ derived in Theorem~\ref{thm:upper bound} by $U(g)$.
\end{notation}

For $2 \leq g \leq 35$, Table 1 gives the realizable maximum degrees of root, $m(g)$ (coming from compatible pairs of spherical nestled $(n,2)$-actions) and the upper bound $U(g)$. The last column gives the ratio $m(g)/U(g)$. These computations were made using software~\cite{GAP} written for the GAP programming language.
\begin{table}

\begin{center}
    \begin{tabular}{ | c | c | c | c |}
    \hline
   $g$ & $m(g)$ & $U(g)$ & $m(g)/U(g)$\\ \hline
    2 & 12 & 20 & 0.60 \\ \hline
    3 & 30 & 42 & 0.71 \\ \hline
    4 & 42 & 72 & 0.58 \\ \hline
    5 & 90 & 110 & 0.81 \\ \hline
    6 & 126 & 156 & 0.81 \\ \hline
    7 & 210 & 210 & 1.00 \\ \hline
    8 & 240 & 272 & 0.88 \\ \hline
    9 & 330 & 342 & 0.96 \\ \hline
    10 & 390 & 420 & 0.93 \\ \hline
    11 & 462 & 506 & 0.91 \\ \hline
    12 & 546 & 600 & 0.91 \\ \hline
    13 & 570 & 702 & 0.81 \\ \hline
    14 & 714 & 812 & 0.88 \\ \hline
    15 & 798 & 930 & 0.86 \\ \hline
    16 & 858 & 1056 & 0.81 \\ \hline
    17 & 966 & 1190 & 0.81 \\ \hline
    18 & 1122 & 1332 & 0.84 \\ \hline
    19 & 1254 & 1482 & 0.85 \\ \hline
    20 & 1326 & 1640 & 0.81 \\ \hline
    21 & 1518 & 1806 & 0.84 \\ \hline
    22 & 1650 & 1980 & 0.83 \\ \hline
    23 & 1794 & 2162 & 0.83 \\ \hline
    24 & 1950 & 2352 & 0.83 \\ \hline
    25 & 2046 & 2550 & 0.80 \\ \hline
    26 & 2262 & 2756 & 0.82 \\ \hline
    27 & 2418 & 2970 & 0.81 \\ \hline
    28 & 2550 & 3192 & 0.80 \\ \hline
    29 & 2730 & 3422 & 0.80 \\ \hline
    30 & 2958 & 3660 & 0.81 \\ \hline
    31 & 3162 & 3906 & 0.81 \\ \hline
    32 & 3306 & 4160 & 0.79 \\ \hline
    33 & 3570 & 4422 & 0.81 \\ \hline
    34 & 3774 & 4692 & 0.80 \\ \hline
    35 & 3990 & 4970 & 0.80 \\ \hline
\end{tabular}
\end{center}
\caption{The data seems to indicate that for large genera the ratio $m(g)/U(g)$ stabilizes to the 0.79-0.82 range.}
\end{table}

\begin{lemma}
\label{lem:odd-action bound}
Suppose that we have a spherical nestled $(4g - N, 2)$-action on a $F$ of genus $g$, where $N$ is a positive odd integer. Then $g \leq N + 3$.
\end{lemma}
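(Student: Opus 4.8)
The plan is to pass from the geometry to the data set of the action and reduce everything to an elementary inequality among a few positive integers. Write $n = 4g - N$. Since the action is a spherical nestled $(4g-N,2)$-action, its quotient orbifold is a $2$-sphere carrying exactly three cone points: the distinguished one, of order $n$, and two others of orders $x_1 \le x_2$. The crucial first observation is that $n$ is odd (because $4g$ is even and $N$ is odd), so each $x_i$, being a divisor of $n$, is odd; in particular $x_i \ge 3$. With $\widetilde{g} = 0$ and $\ell = 2$, Equation~\ref{chi:orb} gives
\[
\frac{1}{x_1} + \frac{1}{x_2} \;=\; \frac{2g - N + 1}{4g - N}\,.
\]

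The second step is to parametrize the cone orders. Put $d = \gcd(x_1,x_2)$ and write $x_i = d\,u_i$ with $\gcd(u_1,u_2) = 1$ and $u_1 \le u_2$; since the $x_i$ are odd, so are $d$, $u_1$, $u_2$. By Remark~\ref{prop1}, $\lcm(x_1,x_2) = n$, i.e. $d\,u_1 u_2 = 4g - N$. Substituting $x_i = d\,u_i$ into the displayed equation, its left-hand side becomes $(u_1+u_2)/(d\,u_1 u_2)$, and since $d\,u_1 u_2 = 4g-N$ the whole equation collapses to the single linear relation $u_1 + u_2 = 2g - N + 1$. Eliminating $g$ between this and $d\,u_1 u_2 = 4g - N$ gives the identity
\[
d\,u_1 u_2 - 2u_1 - 2u_2 \;=\; N - 2\,,
\]
which I will call $(\star)$. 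Since $2g = u_1 + u_2 + N - 1$, the desired conclusion $g \le N+3$ is equivalent to $u_1 + u_2 \le N + 7$, and this is what I will extract from $(\star)$.

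The last step is a short case split on the odd integer $u_1$. If $u_1 = 1$, then $x_1 = d > 1$ forces $d \ge 3$, so $(\star)$ reads $(d-2)u_2 = N$, giving $u_2 \le N$ and hence $u_1 + u_2 \le N + 1$. If instead $u_1 \ge 3$, then $3 \le u_1 \le u_2$ gives $(u_1 - 3)(u_2 - 3) \ge 0$, i.e. $u_1 u_2 \ge 3(u_1 + u_2) - 9$; plugging this together with $d \ge 1$ into $(\star)$ yields
\[
N \;=\; d\,u_1 u_2 - 2(u_1+u_2) + 2 \;\ge\; u_1 u_2 - 2(u_1+u_2) + 2 \;\ge\; (u_1 + u_2) - 7\,.
\]
In either case $u_1 + u_2 \le N + 7$, so $g \le N + 3$.

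I do not expect a genuine obstacle; the only point that needs care is the bookkeeping of the data set axioms --- checking that every divisibility condition is automatically satisfied under the $d\,u_1 u_2$ parametrization, and that the sole number-theoretic input of the hypothesis, namely that $N$ is odd (hence $n$, and therefore each $x_i$, is odd), is genuinely used. That oddness is precisely what rules out $u_1 = 2$, which would otherwise be a troublesome case requiring its own parity argument.
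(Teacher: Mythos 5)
Your proof is correct, and it takes a genuinely different --- and in fact more careful --- route than the paper's. The paper starts from the same two ingredients (the Riemann--Hurwitz identity $\frac{1}{x_1}+\frac{1}{x_2}=\frac{2g-N+1}{4g-N}$ and the fact that oddness of $4g-N$ forces $x_i\geq 3$), but it then asserts, citing $\lcm(x_1,x_2)=4g-N$, that $x_2\geq\frac{1}{3}(4g-N)$, and plugs both bounds in to get $\frac{2g-N+1}{4g-N}\leq\frac{1}{3}+\frac{3}{4g-N}$, which is literally equivalent to $g\leq N+3$. That intermediate bound on $x_2$ is only justified when $x_1=3$, since in general $\lcm(x_1,x_2)=n$ gives only $x_2\geq n/x_1$; for instance $(105,0,88;(1,15),(2,21))$ is a genuine spherical nestled $(105,2)$-action data set with $g=47$ and $N=83$, where $x_2=21<35=\frac{1}{3}(4g-N)$, so the paper's one-line justification does not cover it (the lemma's conclusion of course still holds there, $47\leq 86$). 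Your parametrization $x_i=d\,u_i$ with $d\,u_1u_2=n$, the resulting identity $d\,u_1u_2-2(u_1+u_2)=N-2$, and the split into $u_1=1$ (where $x_1>1$ and oddness give $d\geq 3$) versus $u_1\geq 3$ (the case $u_1=2$ being excluded by oddness) treat all configurations uniformly, including the ones the paper's shortcut misses. So the comparison is: the paper's argument is shorter but, as written, has a gap in the step bounding $x_2$; your elementary Diophantine reformulation costs a few extra lines of case analysis but yields a complete proof of the same sharp bound, and the identity $(\star)$ even shows where near-equality forces $u_1\in\{1,3\}$ and $d$ small.
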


\begin{proof}
Let $D = (4g-N, 0, a; (c_1, x_1), (c_2, x_2))$ be a data set for the nestled $(4g - N, 2)$-action on $F$. Since $4g-N$ is odd and $x_i \mid n$, we have that $x_i \geq 3$. If $x_1 \geq 3$, then Remark~\ref{prop1} implies that $x_2 \geq \frac{1}{3}(4g-N)$. So Equation~\ref{eq:nestled_action_riemann_hurwitz} gives the inequality
\[ \frac{2g - N + 1}{4g - N} \leq \frac{1}{3} + \frac{3}{4g-N}\ ,\] which upon simplification gives $g \leq N+3$.
\label{prop:stable-root-bounds}
\end{proof}

\begin{theorem}
Let $F = F_1 \#_C F_2$ be a closed oriented surface of genus $g \geq 2$. Suppose that $M(g_1,g_2)$ denotes the maximum order of a root of the Dehn twist $t_C$ about $C$. Then given a positive odd integer $N$, we have that $M(g_1,g_2) \leq  16g_1g_2 + 4(2g_1 - Ng_2)- 2N$ whenever both $g_i > N+3$.
\label{prop:stub}
\end{theorem}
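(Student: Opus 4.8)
The plan is to mimic the proof of Theorem~\ref{thm:upper bound} (the $n\le 4g^2+2g$ bound), but feed in the sharper per-side estimate coming from Lemma~\ref{prop:stable-root-bounds} in place of the crude Remark~\ref{rem:maxorder} bound $n_i\le 4g_i+2$. First I would recall that a root of degree $n$ corresponds (via Theorem~\ref{thm1}) to a data set pair $(D_1,D_2)$ with $n=\lcm(n_1,n_2)$, and that $n_i\le M(g_1,g_2)$ forces $n_i$ to be large when we are chasing the maximum. The key input is that under the hypothesis $g_i>N+3$, Lemma~\ref{prop:stable-root-bounds} rules out any \emph{odd} value $n_i=4g_i-N'$ for every positive odd $N'\le N$ (any such spherical nestled action would force $g_i\le N'+3\le N+3$, contradicting $g_i>N+3$; and by Proposition~\ref{prop:sphlb} such large-order actions are automatically spherical). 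Combined with Proposition~\ref{prop3}, this means the largest odd value $n_i$ can take is at most $4g_i-N-2$, while the largest even value is still $4g_i+2$ (with the Proposition~\ref{prop:norootorder} restriction that $n_1$ and $n_2$ are not both $\equiv 2\bmod 4$).

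Next I would do the casework on the parities and residues of $n_1,n_2$ exactly as in Proposition~\ref{prop:root-bounds}, but now the maximum of $n_1 n_2$ (an upper bound for $\lcm(n_1,n_2)$) over the allowed region is dominated by the case where one side is even and equal to $4g_i+2$ and the other is odd and equal to $4g_j-N-2$. So I would compute $\max\{(4g_1+2)(4g_2-N-2),\,(4g_1-N-2)(4g_2+2)\}$; using $g_1\ge g_2$ the first factor-pair is the larger, giving $(4g_1+2)(4g_2-N-2)=16g_1g_2+4(2g_1-Ng_2)-2N-4g_1\cdot\!(\text{lower order})$. I'd then check that the even--even case ($n_i=4g_i$, giving $4\lcm(g_1,g_2)\le 4g_1g_2$) and the case where both are even but not both $\equiv 2\bmod4$ are all dominated by this quantity, and that $16g_1g_2+4(2g_1-Ng_2)-2N$ is the clean bound one lands on. A small amount of care is needed to confirm the claimed expression matches $(4g_1+2)(4g_2-N-2)$ up to the dropped terms, and that dropping them is legitimate because those terms are negative.

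The main obstacle I anticipate is bookkeeping precision in this last optimization: making sure the asserted closed form $16g_1g_2+4(2g_1-Ng_2)-2N$ really is an upper bound for \emph{every} admissible pair $(n_1,n_2)$ and not just the ``expected'' extremal one, since $\lcm(n_1,n_2)$ can behave subtly when $n_1,n_2$ share large common factors. I would handle this by always replacing $\lcm(n_1,n_2)$ by $n_1 n_2$ (valid since $\lcm\le$ product), reducing everything to maximizing a product of two integers subject to the parity/residue constraints and $n_i\le 4g_i+2$, $n_i\ne 4g_i+1$, $n_i\notin\{4g_i-N':N'\text{ odd},\,1\le N'\le N\}$; that turns the problem into a finite comparison of a handful of candidate extremal pairs, which is routine. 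One secondary point to verify is that Proposition~\ref{prop:sphlb} indeed applies to all the candidate orders $n_i$ under consideration (i.e. $n_i>\tfrac{2}{3}(2g_i-1)$), so that Lemma~\ref{prop:stable-root-bounds}'s sphericity hypothesis is automatically met; this holds for $n_i$ anywhere near $4g_i$, which is the only regime that matters for the maximum.
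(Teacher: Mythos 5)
Your route is essentially the paper's (bound the odd-side degree by roughly $4g_i-N$ via Lemma~\ref{lem:odd-action bound}, keep $4g_i+2$ for the even side, treat the both-even case separately), but two steps in your plan do not go through as written. First, Lemma~\ref{lem:odd-action bound} is a statement about spherical nestled $(4g-N',2)$-actions, i.e.\ it requires $\ell_i=2$; Proposition~\ref{prop:sphlb} only gives you sphericity, not $\ell_i=2$. To exclude an odd $n_i=4g_i-N'$ you must also force $\ell_i=2$, e.g.\ by Corollary~\ref{coro:small_spherical_actions}(iv) (odd $n\ge 5$ with $n>\frac{15}{17}(2g-1)$ implies $\ell=2$), which applies since $g_i>N+3$ gives $4g_i-N'>\frac{15}{17}(2g_i-1)$; the paper does exactly this by splitting into $\ell_i=2$ and $\ell_i>2$, bounding the latter by $\frac{15}{17}(2g_i-1)$ directly.

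Second, and more seriously, your blanket reduction ``replace $\lcm(n_1,n_2)$ by $n_1n_2$'' fails precisely in the both-even case you promise to check: the pair $n_1=4g_1$, $n_2=4g_2+2$ is admissible under all of your listed constraints (and both degrees are actually realized, by Example~\ref{ex:nestled actions}), yet its product is $16g_1g_2+8g_1$, which strictly exceeds the claimed bound $16g_1g_2+8g_1-4Ng_2-2N$. So the finite comparison you describe would not close. The fix is the one the paper uses: when both $n_i$ are even, $\gcd(n_1,n_2)\ge 2$, so $\lcm(n_1,n_2)\le \tfrac12 n_1n_2\le 8g_1g_2+4g_1$, and this is then shown to lie below $16g_1g_2+4(2g_1-Ng_2)-2N$ using $g_1>N+3$. (A minor slip elsewhere: with $g_1\ge g_2$ the larger of your two mixed products is $(4g_1-N-2)(4g_2+2)$, not $(4g_1+2)(4g_2-N-2)$; both sit below the stated bound, so this only affects bookkeeping, not the conclusion.)
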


\begin{proof}
By Remark~\ref{rem:maxorder}, each $n_i\leq 4g_i+2$. From Propositions~\ref{prop3} and~\ref{prop:norootorder}, we know that $n_i \neq 4g_i+1$ and that $n_i$ cannot both be $4g_i+2$. Suppose that the $n_i$ are not both even. If $\ell_i > 2$, then from Corollary~\ref{coro:small_spherical_actions} we have that $n_i \leq \frac{15}{17}(2g_i-1)$. If $\ell_i=2$, then Lemma~\ref{lem:odd-action bound} tells us that for all $g_i > N+3$, there exists no spherical nestled $(4g_i - N, 2)$-action on $F$. In particular, if $g_i > N+3$, then from Proposition ~\ref{prop:sphlb}, $n_i \leq \frac{2}{3}(2g_i-1) \leq \frac{15}{17}(2g_i-1)$. So for all $\ell$, if $g_i > N+3$, then $n_i \leq \frac{15}{17}(2g_i-1)$. We can see that $\frac{15}{17}(2g_i-1) \leq 4g_i-N$ whenever $g_i \geq \frac{1}{38}(17N-15)$. Therefore, if $g_i > \max \{N+3, \frac{1}{38}(17N-15) \} = N+3$, then we have that $M(g_1,g_2) \leq \max\{(4g_1-N)(4g_2+2), (4g_1+2)(4g_2-N)\} = 16g_1g_2 + 4\max\{(2g_1 - Ng_2), (2g_2 - Ng_1)\} - 2N = 16g_1g_2 + 4(2g_1 - Ng_2)- 2N$.

Suppose that both the $n_i$ are even. Then from Propositions~\ref{prop3} and~\ref{prop:norootorder}, we have that $M(g_1,g_2) \leq \lcm(4g_1+2,4g_2) \leq 8g_1g_2+4g_2$. We need to show that $8g_1g_2+4g_2 \leq 16g_1g_2 + 4(2g_1 - Ng_2)- 2N$. Since $g_1 > N+3$, $(16g_1g_2 + 4(2g_1 - Ng_2)- 2N) - (8g_1g_2+4g_2) = 8g_1g_2 +8g_1 - 4(N+1)g_2-2N > 8g_1g_2 + 8g_1 + 4(g_1-2)g_2 + 2(g_1-3) = 12g_1g_2 + 10g_1 - 8g_2 - 6 > 0$.
\end{proof}

\begin{notation}
We will denote the upper bound $16g_1g_2 + 4(2g_1 - Ng_2)- 2N$ derived in Theorem ~\ref{prop:stub} by $U(g_1,g_2,N)$.
\end{notation}

\begin{example}
When $N =11$, if both $g_i > 14$, then from Theorem~\ref{prop:stub}, $M(g_1,g_2) \leq U(g_1,g_2,11) = 16g_1g_2 + 4(2g_1 - 11g_2)- 22$. For genera pairs $(g_1,g_2)$ with $30 \leq g_1 + g_2 \leq 35$, Table 2 gives the values of the realizable maximum degree $m(g_1,g_2)$ (coming from compatible spherical nestled $(n,2)$-actions), the upper bound $U(g_1,g_2)$ (derived in Proposition~\ref{prop:root-bounds}), and the stable upper bound $U(g_1,g_2, N)$.
\begin{table}
\label{table:stable}
\begin{center}
    \begin{tabular}{| c | c | c | c | c |}
    \hline
    $g$ & $(g1,g2)$ & $m(g_1,g_2)$ & $U(g_1,g_2,11)$ & $U(g_1,g_2)$ \\ \hline
    30 & $(15, 15)$ & 2790 & 3038 & 3658 \\ \hline
    31 & $(16, 15)$ & 3162 & 3286 & 3906 \\ \hline
    32 & $(16, 16)$ & 3264 & 3498 & 4158 \\ \hline
    32 & $(17, 15)$ & 3162 & 3534 & 4154 \\ \hline
    33 & $(17, 16)$ & 3570 & 3762 & 4422 \\ \hline
    33 & $(18, 15)$ & 3534 & 3782 & 4402 \\ \hline
    34 & $(17, 17)$ & 3570 & 3990 & 4690 \\ \hline
    34 & $(18, 16)$ & 3774 & 4026 & 4686 \\ \hline
    34 & $(19, 15)$ & 3534 & 4030 & 4650 \\ \hline
    35 & $(18, 17)$ & 3990 & 4270 & 4970 \\ \hline
    35 & $(19, 16)$ & 3876 & 4290 & 4950 \\ \hline
    35 & $(20, 15)$ & 3690 & 4278 & 4898 \\ \hline
    \end{tabular}
\end{center}
\caption{For $N=11$, this data illustrates the stable bound $U(g_1,g_2,11)$ and the upper bound $U(g_1,g_2)$. When $g = 32$, we saw in Table 1 that the maximum realizable degree $m(g) = 3306$. This is larger than both the stable bounds $U(16,16,11)$ and $U(17,15,11)$.}
\end{table}
\end{example}
\section*{acknowledgements}
I would like to thank Steven Spallone for some useful discussions in elementary number theory.

\bibliographystyle{plain}
\bibliography{sepcurve}	

\end{document}